\newtheorem{definition}{Definition}
\newtheorem{example}{Example}
\newtheorem{theorem}{Theorem}
\newtheorem{lemma}{Lemma}
\newtheorem{proposition}{Proposition}
\newtheorem{corollary}{Corollary}
\begin{document}

\title{Explicit crystalline lattices in rigid cohomology}

\author{George M. Walker}

\maketitle 

\begin{abstract}
Motivated by applications in point counting algorithms using $p$-adic cohomology, we give an explicit description of integral lattices in rigid cohomology spaces that $p$-adically approximate logarithmic crystalline cohomology modules. These lattices are expressed in terms of the global sections of the twisted logarithmic de Rham complex. We prove the main theorem for smooth proper hypersurfaces with a smooth hyperplane section, then deduce the result for the quotient of such a pair in weighted projective space. We show how these results may be used to reduce the necessary $p$-adic precision with which one must work to compute zeta functions of varieties over finite fields.
\end{abstract}

\section{Introduction}
Rigid cohomology was introduced by Berthelot \cite{ber1} as a $p$-adic analogue of $\ell$-adic \'{e}tale cohomology. In addition to providing a method to attack the Weil conjectures in a $p$-adic fashion, rigid cohomology has over the last decade become increasingly useful for explicit computations. In 2001 Kedlaya \cite{kedhyp} exhibited an algorithm for computing the zeta functions of hyperelliptic curves, which has since been extensively generalised, see for instance \cite{AKR,gerk,laudfib,laudranks}. In all of these algorithms, a major complexity issue stems from the fact that one is computing with vector spaces over a $p$-adic field. Of course, one cannot work exactly and must perform computations with truncations of $p$-adic expansions, and the question of to how many digits one must work to guarantee a provably correct output is a pertinent one. Another problem is that during an algorithm one must perform several ``divisions by $p$'', and loosely speaking each one amounts to losing a digit of $p$-adic precision. One must be able to bound before starting the algorithm this precision loss in order to obtain a provably correct output. A sensible idea, therefore, is to use crystalline cohomology, an \it integral \normalfont cohomology theory closely related to rigid cohomology, where there are no negative powers of $p$. Unfortunately, a crucial step in point-counting algorithms is to compute an explicit lift of the Frobenius morphism on an algebraic variety, and it is not known how to do this directly on crystalline cohomology. One can in many cases, however, generate a lattice inside rigid cohomology that in some way ``comes from'' rigid cohomology. Lauder in \cite{laudranks} constructs these ``crystalline 
bases'' for use in the refined fibration method for computing zeta functions of a subclass of surfaces that admit a fibration into hyperelliptic curves. 

In this paper we generalise this construction in Corollary \ref{CrysBasisCor} to any smooth variety with a smooth hyperplane section.  For its use in applications we also treat the case of the quasi-smooth quotient of such a pair in weighted projective space in Theorem \ref{QuoCrysBas}, omitting the details of the proof since they mirror closely those of the smooth case. We also, in Theorem \ref{CharPolyLossThm}, quantify the statement that if we make use of these lattices, there is much less precision loss than one would expect in computing the characteristic polynomial of Frobenius, and hence the required precision for calculations is much lower throughout the algorithm.

\section{Smooth hypersurfaces}\label{smoothcrys}

Let ${\bf F}_q$ be a finite field of characteristic $p$. Denote by ${\cal V}$ the ring of Witt vectors of ${\bf F}_q$, and let $K$ be the field of fractions of ${\cal V}$. Let $\sigma$ (resp. $\sigma_K$) denote the $q$-power Frobenius automorphism on ${\cal V}$ (resp. $K$). Let $\bar{{\cal X}}$ be a smooth proper subscheme of ${\bf P}^{n+1}_{{\cal V}}$ for some $n$, let ${\cal D}$ be a smooth divisor on $\bar{{\cal X}}$, and let ${\cal U}=\bar{{\cal X}}\setminus {\cal D}$. Note that in particular $(\bar{{\cal X}},{\cal D})$ is a smooth ${\cal V}$-pair in the sense of \cite[Definition 2.2.1]{AKR}. Let $\bar{X}$, $D$ and $U$ be the special fibres of $\bar{{\cal X}}$, ${\cal D}$ and ${\cal U}$ respectively. We will be particularly interested in the case where $\bar{X}$ is a smooth hypersurface in ${\bf P}^{n+1}_{{\bf F}_q}$ with $D$ a smooth hyperplane section. For our purposes an $F$-crystal is a finitely generated ${\cal V}$-module $M$ with a $\sigma$-linear map $F:M\rightarrow M$ that becomes an injection after tensoring with $K$, and an $F$-isocrystal is a finite dimensional $K$-vector space $V$ with an injective $\sigma_K$-linear map $F:V\rightarrow V$. For ease of notation we write $F$ for the action of Frobenius on the $F$-crystals crystalline cohomology $H^i_{\mathop{\rm cris}}(\star)$ and log-crystalline cohomology $H^i_{\mathop{\rm cris}}((\star,\star))$, or on the $F$-isocrystals rigid cohomology $H^i_{\mathop{\rm rig}}(\star)$. We shall write $M(m)$ to denote the $F$-crystal or $F$-isocrystal $M$ with Frobenius action twisted by $q^{-m}$.

\subsection{The lattices $H(\bar{X}), H(\bar{X})_{\mathop{\rm prim}}, H(\bar{X},\log D)$}
It is not practical to work directly with the image of $H^n_{\mathop{\rm cris}}(\bar{X})\rightarrow H^n_{\mathop{\rm rig}}(\bar{X})$. The rigid cohomology of affine schemes is far easier from a computational point of view: it is isomorphic with Monsky-Washnitzer cohomology, defined using differential forms on an affine scheme, as used in Kedlaya's original algorithm \cite{kedhyp}. With this in mind, in this section we define certain ${\cal V}$-lattices contained in $K$-vector spaces, following \cite[Definition 2.2]{laudranks} but generalising to any dimension.
\begin{definition}
Since $(\bar{{\cal X}},{\cal D})$ is a smooth ${\cal V}$-pair, by \cite[Proposition 1.9]{berthfinitude} there is an $F$-equivariant isomorphism
\begin{equation}\label{CompTher}
H^n_{\mathop{\rm cris}}(\bar{X})\otimes_{{\cal V}} K \cong H^n_{\mathop{\rm rig}}(\bar{X}).
\end{equation}
With this in mind, we define the $F$-stable full rank ${\cal V}$-lattice $H(\bar{X})\subset H^n_{\mathop{\rm rig}}(\bar{X})$ as 
\[
H(\bar{X}):=\mathop{\rm im}\bigl(H^n_{\mathop{\rm cris}}(\bar{X})\rightarrow H^n_{\mathop{\rm rig}}(\bar{X})\bigr).
\]
For $Z$ a smooth proper hypersurface in ${\bf P}_{{\bf F}_q}^{N+1}$ for some $N$, we define the primitive middle crystalline cohomology of $Z$ as the twisted $(N+1)$-st cohomology of the smooth pair $({\bf P}_{{\bf F}_q}^{N+1},Z)$. That is,
\[
H_{\mathop{\rm cris},\mathop{\rm prim}}^N(Z):=H^{N+1}_{\mathop{\rm cris}}(({\bf P}_{{\bf F}_q}^{N+1},Z))(1).
\]
The \it primitive middle rigid cohomology of $Z$ \normalfont is
\[
H_{\mathop{\rm rig},\mathop{\rm prim}}^N(Z)=H_{\mathop{\rm cris},\mathop{\rm prim}}^N(Z)\otimes K.
\]
\end{definition}
The reason why we work with primitive cohomology is described in the following lemma, an integral version of \cite[\S 3.3]{AKR}.
\begin{lemma}\label{PrimProp}
If $N$ is odd then
\[
H_{\mathop{\rm cris},\mathop{\rm prim}}^N(Z)\cong H_{\mathop{\rm cris}}^N(Z).
\]
If $N$ is even then
\[
H_{\mathop{\rm cris},\mathop{\rm prim}}^N(Z)\cong H_{\mathop{\rm cris}}^N(Z)/R.
\]
where $R$ is a rank one $F$-crystal on which the Frobenius action is given by multiplication by $q^{N/2}$.
\end{lemma}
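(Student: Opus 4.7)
The strategy is to derive both assertions from the logarithmic Gysin/residue long exact sequence in crystalline cohomology attached to the smooth pair $({\bf P}_{{\bf F}_q}^{N+1}, Z)$. After twisting by $(1)$ and unfolding the definition $H^N_{\mathop{\rm cris},\mathop{\rm prim}}(Z) := H^{N+1}_{\mathop{\rm cris}}(({\bf P}_{{\bf F}_q}^{N+1}, Z))(1)$, the relevant portion of that sequence reads
\[
H^{N-1}_{\mathop{\rm cris}}(Z) \to H^{N+1}_{\mathop{\rm cris}}({\bf P}_{{\bf F}_q}^{N+1})(1) \to H^N_{\mathop{\rm cris},\mathop{\rm prim}}(Z) \to H^N_{\mathop{\rm cris}}(Z) \to H^{N+2}_{\mathop{\rm cris}}({\bf P}_{{\bf F}_q}^{N+1})(1).
\]
I would then feed in the standard computation $H^{2k}_{\mathop{\rm cris}}({\bf P}_{{\bf F}_q}^{N+1})\cong{\cal V}$ with Frobenius action $q^k$ (generated by the $k$-fold cup power of the hyperplane class $h$) and zero in odd degree, together with weak Lefschetz to identify $H^{N-1}_{\mathop{\rm cris}}(Z) \cong H^{N-1}_{\mathop{\rm cris}}({\bf P}_{{\bf F}_q}^{N+1})$.

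The proof then splits on the parity of $N$. For $N$ odd the rightmost term vanishes; the leftmost Gysin map is, by the projection formula, the map $h^{(N-1)/2}|_Z \mapsto \deg(Z)\cdot h^{(N+1)/2}$ between two copies of ${\cal V}$ carrying matching Frobenius action $q^{(N-1)/2}$, hence an isomorphism, and the sequence collapses to $H^N_{\mathop{\rm cris},\mathop{\rm prim}}(Z)\cong H^N_{\mathop{\rm cris}}(Z)$. For $N$ even it is instead $H^{N+1}_{\mathop{\rm cris}}({\bf P}_{{\bf F}_q}^{N+1})$ that vanishes, giving an injection $H^N_{\mathop{\rm cris},\mathop{\rm prim}}(Z) \hookrightarrow H^N_{\mathop{\rm cris}}(Z)$ whose cokernel is, via the rightmost Gysin map, a rank one sub $F$-crystal of $H^{N+2}_{\mathop{\rm cris}}({\bf P}_{{\bf F}_q}^{N+1})(1) \cong {\cal V}$ on which Frobenius acts by $q^{(N+2)/2-1} = q^{N/2}$. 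Setting $R$ to be the rank one sub $F$-crystal of $H^N_{\mathop{\rm cris}}(Z)$ generated by $h^{N/2}|_Z$ and invoking the projection formula once more shows that $R$ maps isomorphically onto this cokernel; this splits the short exact sequence and yields $H^N_{\mathop{\rm cris},\mathop{\rm prim}}(Z) \cong H^N_{\mathop{\rm cris}}(Z)/R$.

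The main obstacle is keeping track of the integral structure throughout. After inverting $p$ the category of $F$-isocrystals is semisimple and all of the above is essentially automatic, recovering the rigid statement of \cite[\S 3.3]{AKR}. Integrally, one must verify that the factor of $\deg Z$ arising in the projection formula does not introduce $p$-torsion obstructing the relevant cokernels from vanishing, and, in the even case, that $R$ really is a direct summand of $H^N_{\mathop{\rm cris}}(Z)$ as an $F$-crystal; equivalently, that the class $h^{N/2}|_Z$ lifts to a Frobenius-stable generator of a rank one summand. These torsion and splitting checks are the heart of upgrading the rigid version to the required crystalline-lattice statement.
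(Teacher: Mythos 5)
Your strategy --- twist the long exact sequence of the pair $({\bf P}^{N+1},Z)$ by $(1)$, compute the projective-space terms, and identify the Gysin maps via the projection formula and weak Lefschetz --- is the same one the paper uses; its proof cites that very exact sequence together with ``standard results on the cohomology of proper varieties'' and a thesis reference.

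There is, however, a genuine gap that you flag but do not close, and it is precisely where the integral (as opposed to $K$-linear) content of the lemma lives. In the odd case you compute the Gysin map $H^{N-1}_{\mathop{\rm cris}}(Z)\to H^{N+1}_{\mathop{\rm cris}}({\bf P}^{N+1})(1)$ as multiplication by $\deg Z$ (up to units) between free rank-one ${\cal V}$-modules, and then assert ``hence an isomorphism.'' That implication does not follow: multiplication by $\deg Z$ is a ${\cal V}$-module isomorphism only when $\deg Z$ is a $p$-adic unit, and no such hypothesis has been imposed on $Z$. You acknowledge exactly this in your closing paragraph (``one must verify that the factor of $\deg Z$ \dots does not introduce $p$-torsion'') but supply no verification, and the earlier ``hence an isomorphism'' is inconsistent with that caveat. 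The worry is not hypothetical: in the topological analogue, $H^2({\bf CP}^2\setminus C;{\bf Z})\cong{\bf Z}/d\oplus{\bf Z}^{2g}$ for a smooth plane curve $C$ of degree $d$, so the cokernel of the corresponding Gysin map is genuinely a torsion group of order $d$. The same caution applies to the even case, where you need $R$ to split off of $H^N_{\mathop{\rm cris}}(Z)$ as an $F$-crystal; you name this as ``the heart of upgrading the rigid version'' without actually doing it. As written, the argument is complete only after tensoring with $K$ --- i.e.\ it recovers the known rigid-cohomology statement --- and the crystalline statement at the lattice level, which is the point of the lemma, still requires the two verifications you defer.
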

\begin{proof}
Use the exact sequence of \cite[Proposition 2.2.8, Proposition 2.4.1]{AKR}
\[
\dots\rightarrow H^i_{\mathop{\rm cris}}({\bf P}_{{\bf F}_q}^{N+1})\rightarrow H^i_{\mathop{\rm cris}}(({\bf P}_{{\bf F}_q}^{N+1},Z))\rightarrow H^{i-1}_{\mathop{\rm cris}}(Z)(-1)\rightarrow H^{i+1}_{\mathop{\rm cris}}({\bf P}_{{\bf F}_q}^{N+1})\rightarrow \dots,
\]
and standard results on the cohomology of proper varieties: see \cite[Lemma 5.1.3]{GMWThesis}. 
\end{proof}

 We now return to the pair $(\bar{X},D)$. By \cite[Proposition 2.2.8, Proposition 2.4.1]{AKR} there is an exact sequence
\begin{equation}\label{thetaseq}
\dots\rightarrow H^{n-2}_{\mathop{\rm cris}}(D)(-1)\stackrel{\Theta}{\rightarrow} H^n_{\mathop{\rm cris}}(\bar{X})\rightarrow H^n_{\mathop{\rm cris}}((\bar{X},D))\rightarrow \dots.
\end{equation}
\begin{proposition}
There is an isomorphism
\begin{equation}\label{cristoprim}
\frac{H^n_{\mathop{\rm cris}}(\bar{X})}{\mathop{\rm im} (\Theta)}\cong H^n_{\mathop{\rm cris},\mathop{\rm prim}}(\bar{X}).
\end{equation}
\end{proposition}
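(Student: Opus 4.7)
My strategy is to compute $\mathop{\rm im}(\Theta)$ explicitly via the projection formula and then appeal to Lemma~\ref{PrimProp}. The key point is that, since $D$ is a smooth hyperplane section of $\bar X$, iterated use of weak Lefschetz reduces the calculation of $\mathop{\rm im}(\Theta)$ to a trivial computation with powers of the hyperplane class on projective space.

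First, I would apply the projection formula to $\Theta$. Writing $i_D\colon D\hookrightarrow\bar X$ for the closed immersion, one has $\Theta(i_D^*\alpha) = \alpha\cup[D]$ in $H^n_{\mathop{\rm cris}}(\bar X)$ for every $\alpha\in H^{n-2}_{\mathop{\rm cris}}(\bar X)$. Since $D = \bar X\cap H$ for a hyperplane $H\subset{\bf P}^{n+1}$, the cycle class $[D]\in H^2_{\mathop{\rm cris}}(\bar X)$ is the restriction $h|_{\bar X}$ of the hyperplane class $h\in H^2_{\mathop{\rm cris}}({\bf P}^{n+1})$. Crystalline weak Lefschetz forces $i_D^*\colon H^{n-2}_{\mathop{\rm cris}}(\bar X)\to H^{n-2}_{\mathop{\rm cris}}(D)$ to be an isomorphism, since $n-2<n-1=\dim D$, so $\mathop{\rm im}(\Theta) = h|_{\bar X}\cdot H^{n-2}_{\mathop{\rm cris}}(\bar X)$.

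Next, I would apply weak Lefschetz a second time, now to $\bar X\hookrightarrow{\bf P}^{n+1}$, to identify $H^{n-2}_{\mathop{\rm cris}}(\bar X)\cong H^{n-2}_{\mathop{\rm cris}}({\bf P}^{n+1})$ (valid since $n-2<n=\dim\bar X$). This group is $0$ when $n$ is odd and the free rank-one ${\cal V}$-module on $h^{(n-2)/2}$, with Frobenius $q^{(n-2)/2}$, when $n$ is even. Consequently $\mathop{\rm im}(\Theta)$ is either $0$ or the rank-one submodule ${\cal V}\cdot h^{n/2}|_{\bar X}$ on which Frobenius acts by $q^{n/2}$.

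The proof concludes by invoking Lemma~\ref{PrimProp}. For $n$ odd the quotient $H^n_{\mathop{\rm cris}}(\bar X)/\mathop{\rm im}(\Theta)$ is $H^n_{\mathop{\rm cris}}(\bar X)$, which Lemma~\ref{PrimProp} identifies with $H^n_{\mathop{\rm cris},\mathop{\rm prim}}(\bar X)$. For $n$ even, the same lemma identifies $H^n_{\mathop{\rm cris},\mathop{\rm prim}}(\bar X)$ with $H^n_{\mathop{\rm cris}}(\bar X)/R$ for a rank-one $F$-stable submodule $R$ on which Frobenius acts by $q^{n/2}$; inspecting the proof of Lemma~\ref{PrimProp} shows that $R$ is exactly the image of the restriction $H^n_{\mathop{\rm cris}}({\bf P}^{n+1})\to H^n_{\mathop{\rm cris}}(\bar X)$, namely ${\cal V}\cdot h^{n/2}|_{\bar X}$, matching $\mathop{\rm im}(\Theta)$ on the nose. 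The main obstacle I anticipate is the careful integral bookkeeping: verifying that the crystalline weak Lefschetz isomorphisms and the identification of $R$ hold as ${\cal V}$-modules rather than merely after inverting $p$, so that the two rank-one subcrystals are genuinely equal and not only abstractly isomorphic.
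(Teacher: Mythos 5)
Your proof is correct in outline but takes a genuinely different route from the paper's. The paper disposes of the odd case by noting that the source of $\Theta$ vanishes, and for $n$ even it does not argue structurally at all: it simply invokes the de Rham long exact sequence
$\dots\to H^{n-1}_{\mathop{\rm dR}}((\bar{{\cal X}},{\cal D}))\to H^{n-2}_{\mathop{\rm dR}}({\cal D})\stackrel{\Theta}{\to} H^n_{\mathop{\rm dR}}(\bar{{\cal X}})\to\dots$ together with $H^n_{\mathop{\rm cris}}\cong H^n_{\mathop{\rm dR}}$ and cites an explicit computation of $\Theta$ in the author's thesis. Your argument replaces that explicit computation with the projection formula plus two applications of weak Lefschetz (for ${\cal D}\subset\bar{\cal X}$ and for $\bar{\cal X}\subset{\bf P}^{n+1}$), which is cleaner and more conceptual, and moreover uniformly handles both parities instead of treating the odd case as a degenerate one. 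What the paper's route buys is that it sidesteps the need to verify integral crystalline weak Lefschetz and the integral projection formula, pushing everything into one explicit cocycle-level calculation.

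Two points you should make more precise if you want your sketch to carry the same weight as the paper's. First, the integral bookkeeping you flag is not a cosmetic worry: integral weak Lefschetz and the statement that restriction $H^{n-2}_{\mathop{\rm cris}}(\bar X)\to H^{n-2}_{\mathop{\rm cris}}(D)$ is an isomorphism of $\cal V$-modules (not just of $K$-vector spaces) need the torsion-freeness of the relevant groups, which for smooth hypersurfaces and their hyperplane sections does hold but should be cited explicitly; the proposition is precisely an integral statement, so losing control of $p$-torsion would invalidate it. Second, your final identification of $R$ with ${\cal V}\cdot h^{n/2}\vert_{\bar X}$ cannot be read off by ``inspecting the proof of Lemma~\ref{PrimProp}'' as stated: that proof exhibits $H^n_{\mathop{\rm cris},\mathop{\rm prim}}(\bar X)$ as the \emph{kernel} of the Gysin map $H^n_{\mathop{\rm cris}}(\bar X)\to H^{n+2}_{\mathop{\rm cris}}({\bf P}^{n+1})(1)$, not directly as a quotient. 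Passing from that kernel description to the quotient description $H^n_{\mathop{\rm cris}}(\bar X)/R$ with $R={\cal V}\cdot h^{n/2}\vert_{\bar X}$ requires the integral hard Lefschetz splitting $H^n_{\mathop{\rm cris}}(\bar X)=\ker(\text{Gysin})\oplus\mathop{\rm im}(\text{res})$, which in turn needs the composite $H^n_{\mathop{\rm cris}}({\bf P}^{n+1})\to H^n_{\mathop{\rm cris}}(\bar X)\to H^{n+2}_{\mathop{\rm cris}}({\bf P}^{n+1})(1)$ (multiplication by $d=\deg\bar X$) to be an injection with torsion-free cokernel, a point worth handling explicitly when $p\mid d$. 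Once those two integral facts are pinned down, your argument closes the proof correctly and is, in my view, a cleaner alternative to the explicit computation the paper delegates to the thesis.
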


\begin{proof}
For $n$ odd the result is immediate from equation (\ref{thetaseq}). For $n$ even one can calculate $\Theta$ explicitly using the de Rham cohomology long exact sequence 
\[
\dots\rightarrow H^{n-1}_{\mathop{\rm dR}}((\bar{{\cal X}},{\cal D}))\rightarrow H^{n-2}_{\mathop{\rm dR}}({\cal D})\stackrel{\Theta}{\rightarrow} H^n_{\mathop{\rm dR}}(\bar{{\cal X}})\rightarrow H^n_{\mathop{\rm dR}}((\bar{{\cal X}},{\cal D}))\rightarrow\dots,
\]
see \cite[Proposition 5.1.4]{GMWThesis}. 
\end{proof}
\begin{definition} Since ${\cal D}$ is also smooth, by \cite[\S 2.4]{shiho} there is an isomorphism
\[
H^n_{\mathop{\rm cris}}((\bar{X},D)) \otimes K\cong H^n_{\mathop{\rm rig}}(U)
\]
and so we may define the $F$-stable full rank ${\cal V}$-lattice
\[
H(\bar{X},\log D):=\mathop{\rm im}\bigl(H^n_{\mathop{\rm cris}}((\bar{X},D))\rightarrow H^n_{\mathop{\rm rig}}(U)\bigr).
\] 
\end{definition}
\begin{definition}
Define the lattice
\[
H(\bar{X})_{\mathop{\rm prim}}:=\mathop{\rm im}\bigl(H^n_{\mathop{\rm cris},\mathop{\rm prim}}(\bar{X})\rightarrow H^n_{\mathop{\rm cris}}((\bar{X},D)) \rightarrow H^n_{\mathop{\rm rig}}(U)\bigr)
\]
where the second map is the injection modulo torsion by the work in \cite[\S 2.4]{shiho}, which holds since we are assuming $D$ to be smooth.
\end{definition}

Note that by Lemma \ref{PrimProp}, the following formula holds for the zeta function of $\bar{X}$
\begin{equation}\label{zetaformula}
Z(\bar{X},T)=\frac{\det\bigl(1-FT|H(\bar{X})_{\mathop{\rm prim}}\bigr)^{(-1)^{n+1}}}{(1-T)(1-qT)\dots(1-q^nT)}.
\end{equation}
 
Using the sequence (\ref{thetaseq}) and excision with projective space, we obtain the following sequence \cite[Equation (5.6)]{GMWThesis} that modulo torsion is exact,
\begin{equation}\label{MHStors}
0\rightarrow H^n_{\mathop{\rm cris},\mathop{\rm prim}}(\bar{X}) \rightarrow H^n_{\mathop{\rm cris}}((\bar{X},D))\rightarrow H^{n-1}_{\mathop{\rm cris},\mathop{\rm prim}}(D)(-1)\rightarrow 0.
\end{equation}
We shall write $H(D)_{\mathop{\rm prim}}(-1)$ for the image of the quotient $H^n_{\mathop{\rm cris}}((\bar{X},D))/H^n_{\mathop{\rm cris},\mathop{\rm prim}}(\bar{X})$ as a lattice in $H^n_{\mathop{\rm rig}}(U)$.
By (\ref{MHStors}) one can retrieve the action of Frobenius on $H(\bar{X})_{\mathop{\rm prim}}$ from the action of Frobenius on $H(\bar{X},\log D)$ provided we know how Frobenius acts on $H(D)_{\mathop{\rm prim}}$. In our applications we may apply this observation using an induction argument if $D$ is a smooth hyperplane section of a smooth hypersurface $\bar{X}$.

\subsection{An approximate lattice $H(\bar{X},kD)$}\label{HXkD}

 Our aim is to compute the lattice $H(\bar{X},\log D)\subset H^n_{\mathop{\rm rig}}(U)$. In fact we define yet another lattice, $H(\bar{X},kD)$, which is ``$p$-adically'' close to $H(\bar{X},\log D)$, this term being made more precise later. It is for $H(\bar{X},kD)$ that we provide an explicitly computable basis, and in applications it is on this lattice that we shall compute an approximation to Frobenius. The work in this section is similar to that regarding surfaces in \cite[\S 3]{laudranks}, but some new proofs are required for the case of arbitrary dimension.

\begin{proposition}\label{vanprop}
Suppose ${\cal F}^{\bullet}$ is a complex of coherent sheaves of ${\cal O}_{\bar{{\cal X}}}$-modules such that 
\[
\Omega^{\bullet}_{(\bar{{\cal X}},{\cal D})}\hookrightarrow {\cal F}^{\bullet}\otimes_{{\cal O}_{\bar{{\cal X}}}}{\cal O}_{\bar{{\cal X}}}(l{\cal D})
\]
induces maps on the homology sheaves whose kernels and cokernels are killed by $\mathop{\rm lcm}\{1,\dots,l+1\}$. Assume further that the map on zero-th homology sheaves is an isomorphism. Let $k$ be an integer such that the sheaves ${\cal F}^i\otimes_{{\cal O}_{\bar{{\cal X}}}}{\cal O}_{\bar{{\cal X}}}(k{\cal D})$ are acyclic for all $i\ge 0$ (such a $k$ exists by Serre's vanishing theorem \cite[Theorem 5.2]{harts}). Then there exists a map
\begin{equation}\label{approxlatticequo}
H^n_{\mathop{\rm cris}}((\bar{X},D))\rightarrow \frac{\Gamma\bigl(\bar{{\cal X}},{\cal F}^n\otimes_{{\cal O}_{\bar{{\cal X}}}}{\cal O}_{\bar{{\cal X}}}(k{\cal D})\bigr)}{d\Bigl(\Gamma\bigl(\bar{{\cal X}},{\cal F}^{n-1}\otimes_{{\cal O}_{\bar{{\cal X}}}}{\cal O}_{\bar{{\cal X}}}(k{\cal D})\bigr)\Bigr)},
\end{equation}
whose kernel and cokernel are killed by multiplication by $p^{n\lfloor \log_p (k+1) \rfloor}$.
\end{proposition}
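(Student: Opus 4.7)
The plan is to realise the desired map as a composition: identify $H^n_{\mathop{\rm cris}}((\bar{X},D))$ with the hypercohomology of the logarithmic de Rham complex, push through the map of complexes supplied by the hypothesis, and finally unwind the hypercohomology of the target complex using the acyclicity assumption.

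By Shiho's log-crystalline to log-de Rham comparison (already invoked above when defining $H(\bar{X},\log D)$) we have $H^n_{\mathop{\rm cris}}((\bar{X},D))\cong\mathbb{H}^n(\bar{{\cal X}},\Omega^{\bullet}_{(\bar{{\cal X}},{\cal D})})$. Composing the injection $\Omega^{\bullet}_{(\bar{{\cal X}},{\cal D})}\hookrightarrow {\cal F}^{\bullet}\otimes{\cal O}(l{\cal D})$ with the tautological inclusion ${\cal O}(l{\cal D})\hookrightarrow {\cal O}(k{\cal D})$ (tacitly $k\ge l$) yields a map of complexes, hence a map on $\mathbb{H}^n$. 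Acyclicity of the sheaves ${\cal F}^p\otimes{\cal O}(k{\cal D})$ collapses the hypercohomology spectral sequence $E_1^{p,q}=H^q(\bar{{\cal X}},{\cal F}^p\otimes{\cal O}(k{\cal D}))\Rightarrow\mathbb{H}^{p+q}$ onto its $q=0$ row, giving
\[
\mathbb{H}^n\bigl(\bar{{\cal X}},{\cal F}^{\bullet}\otimes{\cal O}(k{\cal D})\bigr)=H^n\bigl(\Gamma\bigl(\bar{{\cal X}},{\cal F}^{\bullet}\otimes{\cal O}(k{\cal D})\bigr)\bigr),
\]
which embeds canonically into the target quotient of (\ref{approxlatticequo}); the composition is the required map.

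For the precision bound I would use the other hypercohomology spectral sequence $E_2^{p,q}=H^p(\bar{{\cal X}},{\cal H}^q(-))\Rightarrow\mathbb{H}^{p+q}$ applied to the mapping cone of $\Omega^{\bullet}_{(\bar{{\cal X}},{\cal D})}\hookrightarrow {\cal F}^{\bullet}\otimes{\cal O}(k{\cal D})$. By hypothesis the homology sheaves of this cone are killed by $\mathop{\rm lcm}\{1,\dots,k+1\}$ in each positive degree (the $l$-level bound transports to level $k$ via ${\cal O}(l{\cal D})\subset{\cal O}(k{\cal D})$) and vanish in degree zero by the zero-th homology isomorphism assumption. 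Since ${\cal V}$ is a DVR in which every prime other than $p$ is a unit, this kill factor reduces to $p^{\lfloor\log_p(k+1)\rfloor}$. Plugging this into the long exact sequence relating $\mathbb{H}^{\bullet}$ of the two complexes, and counting the $n$ nontrivial $E_{\infty}$ slots contributing to the cone in degrees $n-1$ and $n$ once the $q=0$ slot has been removed, yields the asserted bound $p^{n\lfloor\log_p(k+1)\rfloor}$ on kernel and cokernel.

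The main obstacle is the precision bookkeeping. Two subtle steps are transporting the homology-sheaf hypothesis from pole order $l$ to pole order $k$ without inflating the power of $p$, and confirming that the bound from the column spectral sequence collapses from $N^{n+1}$ to $N^n$ once the $q=0$ slot is removed; a careful cone-level analysis that avoids double counting of kernel and cokernel contributions seems essential here. A secondary point, relevant when ${\cal F}^{\bullet}$ extends beyond degree $n$, is to bound the cokernel of $H^n(\Gamma(\cdot))\hookrightarrow \Gamma({\cal F}^n(k{\cal D}))/d\Gamma({\cal F}^{n-1}(k{\cal D}))$, namely the image of $d\colon\Gamma({\cal F}^n(k{\cal D}))\to\Gamma({\cal F}^{n+1}(k{\cal D}))$, by applying the same input one degree higher.
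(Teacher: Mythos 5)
Your skeleton matches the paper's: identify $H^n_{\mathop{\rm cris}}((\bar X,D))$ with the hypercohomology of the log de Rham complex (the paper actually cites Kato's log-crystalline/log-de Rham comparison here, not Shiho, who is invoked elsewhere for the rigid comparison), push through a map of complexes, and collapse the target hypercohomology by the first spectral sequence using the acyclicity hypothesis. Where you diverge is the precision accounting, and this is where the argument has a genuine gap. You introduce the mapping cone of $\Omega^{\bullet}_{(\bar{\cal X},{\cal D})}\hookrightarrow{\cal F}^{\bullet}\otimes{\cal O}(k{\cal D})$ and claim its homology sheaves are killed by $\mathop{\rm lcm}\{1,\dots,k+1\}$ in positive degree and vanish in degree zero. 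Neither is right: from the long exact sequence of homology sheaves, ${\cal H}^q$ of the cone is an extension of a subsheaf of $\ker\bigl({\cal H}^{q+1}(\Omega)\to{\cal H}^{q+1}({\cal F}(k))\bigr)$ by $\mathrm{coker}\bigl({\cal H}^q(\Omega)\to{\cal H}^q({\cal F}(k))\bigr)$, so for $q\ge 1$ it is only killed by $(\mathop{\rm lcm}\{1,\dots,k+1\})^2$, and for $q=0$ it is killed by $\mathop{\rm lcm}\{1,\dots,k+1\}$, not zero. Feeding these into the spectral sequence and the long exact sequence on $\mathbb H^{\bullet}$ therefore produces a bound strictly worse than the claimed $p^{n\lfloor\log_p(k+1)\rfloor}$. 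In addition, the transport of the hypothesis from pole order $l$ to pole order $k$ via ${\cal O}(l{\cal D})\subset{\cal O}(k{\cal D})$ does not work: tensoring with ${\cal O}((k-l){\cal D})$ changes the source complex from $\Omega^{\bullet}$ to $\Omega^{\bullet}\otimes{\cal O}((k-l){\cal D})$, which is not what you need, and precomposing with the inclusion $\Omega^{\bullet}\hookrightarrow\Omega^{\bullet}\otimes{\cal O}((k-l){\cal D})$ introduces an extra kernel/cokernel contribution you have not controlled. The hypothesis is a family of conditions over $l$; you should simply apply it with $l=k$.

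The paper's route avoids the cone entirely. It compares the two $E_2$ spectral sequences $E_2^{a,b}=H^a(\bar{\cal X},{\cal H}^b(-))\Rightarrow{\bf R}^{a+b}\Gamma(-)$ termwise along the $a+b=n$ diagonal, observes that the graded map in the $b=0$ slot is an isomorphism while the remaining $n$ graded maps have kernel and cokernel killed by $\mathop{\rm lcm}\{1,\dots,k+1\}$, and multiplies these kill factors across the $n$ nontrivial steps of the filtration on ${\bf R}^n\Gamma$ to obtain $(\mathop{\rm lcm}\{1,\dots,k+1\})^n$, whose $p$-part is $p^{n\lfloor\log_p(k+1)\rfloor}$. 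This direct filtration comparison sidesteps the double counting of kernel and cokernel contributions that you correctly flag as an issue with the cone; to make your version watertight you would need the sharper, per-step accounting rather than a single uniform kill factor on the cone's homology. Your secondary observation about the image of $d\colon\Gamma({\cal F}^n(k{\cal D}))\to\Gamma({\cal F}^{n+1}(k{\cal D}))$ is a fair remark on the generality of the statement, but it is harmless in the intended application since ${\cal F}^{\bullet}=\Omega^{\bullet}_{(\bar{\cal X},{\cal D})}$ is concentrated in degrees $0,\dots,n$.
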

\begin{proof}
First, we have the canonical isomorphism \cite[Theorem 6.4]{kato} with logarithmic de Rham cohomology,
\begin{equation}\label{lat1}
H^n_{\mathop{\rm cris}}((\bar{X},D))\cong H^n_{\mathop{\rm dR}}((\bar{{\cal X}},{\cal D})).
\end{equation}
By definition,
\begin{equation}\label{lat2}
H^n_{\mathop{\rm dR}}((\bar{{\cal X}},{\cal D}))={\bf R}^n\Gamma \Omega^{\bullet}_{(\bar{{\cal X}},{\cal D})},
\end{equation}
where $\Omega^{\bullet}_{(\bar{{\cal X}},{\cal D})}$ is the complex of sheaves of algebraic differentials on $\bar{{\cal X}}$ with logarithmic singularities along ${\cal D}$. By consideration of the spectral sequence \cite[Application 5.7.10]{weibel} from homology sheaves to hypercohomology
\[
E_2^{a,b}=H^a({\cal X},{\cal H}^b(\star))\Rightarrow {\bf R}^{a+b}\Gamma\star,
\]
we obtain maps between graded pieces of filtrations of ${\bf R}^n\Gamma \Omega^{\bullet}_{(\bar{{\cal X}},{\cal D})}$ and ${\bf R}^n\Gamma \bigl({\cal F}^{\bullet}\otimes_{{\cal O}_{\bar{{\cal X}}}}{\cal O}_{\bar{{\cal X}}}(k{\cal D})\bigr)$. By the hypotheses of the Proposition, $n$ of these maps have kernel and cokernel killed by multiplication by $\mathop{\rm lcm}\{1,\dots,k+1\}$, and one is an isomorphism. Thus there is a map
\begin{equation}\label{lat3}
{\bf R}^n\Gamma \Omega^{\bullet}_{(\bar{{\cal X}},{\cal D})}\rightarrow {\bf R}^n\Gamma \bigl({\cal F}^{\bullet}\otimes_{{\cal O}_{\bar{{\cal X}}}}{\cal O}_{\bar{{\cal X}}}(k{\cal D})\bigr),
\end{equation}
whose kernel and cokernel are killed by multiplication by $(\mathop{\rm lcm}\{1,\dots,k+1\})^n$. There is a spectral sequence \cite[Remark 2.1.6(ii)]{dimca} computing hypercohomology
\[
E_1^{a,b}=H^b\bigl({\cal X}, {\cal F}^{a}\otimes_{{\cal O}_{\bar{{\cal X}}}}{\cal O}_{\bar{{\cal X}}}(k{\cal D})\bigr)\Rightarrow {\bf R}^{a+b}\Gamma\bigl({\cal F}^{\bullet}\otimes_{{\cal O}_{\bar{{\cal X}}}}{\cal O}_{\bar{{\cal X}}}(k{\cal D})\bigr).
\]
Since ${\cal F}^i\otimes_{{\cal O}_{\bar{{\cal X}}}}{\cal O}_{\bar{{\cal X}}}(k{\cal D})$ are acyclic whenever $i\ge 0$, the only non-zero term on the $a+b=n$ diagonal of $E_1^{a,b}$ is 
\[
E_1^{n,0}=\Gamma\bigl(\bar{{\cal X}},{\cal F}^n\otimes_{{\cal O}_{\bar{{\cal X}}}}{\cal O}_{\bar{{\cal X}}}(k{\cal D})\bigr),
\]
and so 
\begin{equation}\label{lat4}
{\bf R}^n\Gamma \bigl({\cal F}^{\bullet}\otimes_{{\cal O}_{\bar{{\cal X}}}}{\cal O}_{\bar{{\cal X}}}(k{\cal D})\bigr)\cong E_{\infty}^{n,0}.
\end{equation}
Now since ${\cal F}^i\otimes_{{\cal O}_{\bar{{\cal X}}}}{\cal O}_{\bar{{\cal X}}}(k{\cal D})$ are acyclic whenever $i\ge 0$, all maps into and out of $E_r^{n,0}=$ for $r\ge 2$ are zero, and so
\begin{equation}\label{lat5}
E_{\infty}^{n,0}\cong E_2^{n,0} \cong \frac{\Gamma\bigl(\bar{{\cal X}},{\cal F}^n\otimes_{{\cal O}_{\bar{{\cal X}}}}{\cal O}_{\bar{{\cal X}}}(k{\cal D})\bigr)}{d\Bigl(\Gamma\bigl(\bar{{\cal X}},{\cal F}^{n-1}\otimes_{{\cal O}_{\bar{{\cal X}}}}{\cal O}_{\bar{{\cal X}}}(k{\cal D})\bigr)\Bigr)}.
\end{equation}
The result follows from the chain of maps (\ref{lat1}), (\ref{lat2}), (\ref{lat3}), (\ref{lat4}) and (\ref{lat5}).
\end{proof}
The following proposition gives us a very simple choice for ${\cal F}^{\bullet}$.
\begin{proposition}\label{kedgoodsheaves}
Suppose $(\bar{{\cal X}},{\cal D})$ is a smooth pair over ${\cal V}$. Then $\Omega^{\bullet}_{(\bar{{\cal X}},{\cal D})}$ satisfies the conditions for ${\cal F}^{\bullet}$ in Proposition \ref{vanprop}.
\end{proposition}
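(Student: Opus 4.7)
The plan is to verify both conditions of Proposition~\ref{vanprop} for ${\cal F}^\bullet = \Omega^\bullet_{(\bar{{\cal X}},{\cal D})}$ directly. The question is local on $\bar{{\cal X}}$, and away from ${\cal D}$ the twist by ${\cal O}_{\bar{{\cal X}}}(l{\cal D})$ is trivial so the inclusion is an isomorphism there. One would then work \'etale-locally around a point of ${\cal D}$ in coordinates $t,x_2,\ldots,x_{n+1}$ with ${\cal D}=V(t)$, where $\Omega^1_{(\bar{{\cal X}},{\cal D})}$ is free on $\{dt/t,dx_2,\ldots,dx_{n+1}\}$, and the problem reduces to a concrete computation on the cokernel complex $Q^\bullet$ of the inclusion $\iota\colon\Omega^\bullet_{(\bar{{\cal X}},{\cal D})}\hookrightarrow \Omega^\bullet_{(\bar{{\cal X}},{\cal D})}(l{\cal D})$, a complex supported on ${\cal D}$ filtered by pole orders $j=1,\ldots,l$.

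The heart of the argument is a pole-order reduction with controlled denominators. Writing a local section of $Q^\bullet$ at pole order exactly $j$ uniquely as $t^{-j}(\alpha+(dt/t)\wedge\beta)$ with $\alpha,\beta$ involving only $dx_m$'s, I would introduce the local operator
\[
h\bigl(t^{-j}(\alpha+(dt/t)\wedge\beta)\bigr) := -\tfrac{1}{j}\,t^{-j}\beta.
\]
Using the identity $d(t^{-j}\omega)=t^{-j}d\omega - j\,t^{-j}(dt/t)\wedge\omega$, a short calculation should give $(dh+hd)=\mathrm{id}$ on each pole-order-$j$ piece of $Q^\bullet$ after inverting $j$. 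Assembling over $1\le j\le l$ would yield a contracting homotopy on $Q^\bullet$ after inverting $\mathop{\rm lcm}\{1,\ldots,l\}$, so the homology sheaves of $Q^\bullet$ are killed by $\mathop{\rm lcm}\{1,\ldots,l\}$, which divides $\mathop{\rm lcm}\{1,\ldots,l+1\}$ as required. The long exact sequence in homology sheaves attached to $0\to\Omega^\bullet_{(\bar{{\cal X}},{\cal D})}\to \Omega^\bullet_{(\bar{{\cal X}},{\cal D})}(l{\cal D})\to Q^\bullet\to 0$ then gives the claimed bound on kernels and cokernels of the induced maps in positive degrees.

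For the zero-th homology condition, I would check directly that $\ker(d\colon{\cal O}_{\bar{{\cal X}}}\to\Omega^1_{(\bar{{\cal X}},{\cal D})})$ agrees with $\ker(d\colon{\cal O}_{\bar{{\cal X}}}(l{\cal D})\to\Omega^1_{(\bar{{\cal X}},{\cal D})}(l{\cal D}))$. Locally, a $d$-closed section $f=g/t^l$ satisfies $df=t^{-l}\bigl((t\partial_t g-lg)(dt/t)+\sum_m \partial_m g\,dx_m\bigr)$ in the free log-basis; vanishing of $df$ in characteristic zero then forces $\partial_m g=0$ for $m\ge 2$ and $t\partial_t g = lg$, so $g=c\cdot t^l$ for a locally constant $c$, i.e.\ $f\in\ker(d|_{{\cal O}_{\bar{{\cal X}}}})$.

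The main obstacle is making the pole-order filtration argument rigorous---specifically, verifying that the local formula for $h$ is well-defined on $Q^\bullet$ (the decomposition by pole order is not canonical on the complex itself, only on its graded pieces) and that the accumulated denominators combine across pole orders as claimed. This is in essence the log-geometric analogue of the Griffiths pole reduction underlying Kedlaya's original algorithm in~\cite{kedhyp}, and once the homotopy is set up carefully on the pole-order spectral sequence the rest is bookkeeping.
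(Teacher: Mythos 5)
Your approach is substantively different from the paper's. The paper cites AKR, Theorem 2.2.5, for the cokernel bound and the isomorphism on ${\cal H}^0$, and supplies only the kernel bound by a targeted argument in AKR's normal-crossings notation ($Q_D/Q_{D'}$, $\mu^{-1}R_T$, $\gcd(j_i:i\in T)$), inducting on the cardinality of the index set $D$. You instead re-derive all three claims from scratch via a contracting homotopy on the cokernel complex $Q^\bullet$, exploiting the fact that ${\cal D}$ is a single smooth divisor (which is all this paper actually needs). Your local operator $h\bigl(t^{-j}(\alpha+(dt/t)\wedge\beta)\bigr)=-\tfrac{1}{j}t^{-j}\beta$ is correct: writing things out one finds $(dh+hd)\omega=\omega-\vartheta(\omega)$ on pole-order-$\le j$ sections, where $\vartheta$ is an integral (denominator-free) operator landing in pole order $\le j-1$, so the graded pieces are contractible after inverting $j$. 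Your degree-zero computation is also fine.

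The one place you hand-wave is exactly the place that needs care, and you rightly flag it. Naive iteration introduces a factor $1/j$ at each pole order $j=l,l-1,\dots,1$, which compounds to $1/l!$, not $1/\mathop{\rm lcm}\{1,\dots,l\}$. To get the $\mathop{\rm lcm}$ bound you must use that $\vartheta$ is integral: given a cocycle $\omega$ of pole order $\le l$, set $L=\mathop{\rm lcm}\{1,\dots,l\}$ and write $L\omega=d\bigl(h_l(L\omega)\bigr)+\vartheta_l(L\omega)$, noting $h_l(L\omega)=(L/l)\cdot lh_l(\omega)$ is integral; then $\vartheta_l(L\omega)=L\vartheta_l(\omega)$ is again a cocycle, still divisible by $L$, of strictly lower pole order, and one repeats with each $h_j(L\cdot)$ integral since $j\mid L$. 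This telescoping is essentially what the paper's kernel argument does (phrased via $\gcd(j_i:i\in T)$ and induction on $\vert D\vert$). Once that step is made precise your proof is correct and more self-contained than the paper's; the tradeoff is that by leaning on AKR the paper's proof covers normal-crossings ${\cal D}$, whereas yours uses smoothness of ${\cal D}$ in an essential way.
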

\begin{proof}
See \cite[Theorem 2.2.5]{AKR} for the claim on the cokernel and the isomorphism on zero-th cohomology sheaves. For the claim on the kernel, we use the notation of \cite[Theorem 2.2.5]{AKR}, and suppose that
\begin{equation}\label{horrid}
\Omega\in\ker\bigl({\cal H}^r(\Omega^{\bullet}_{(X,Z)/S})\rightarrow {\cal H}^r(\Omega^{\bullet}_{(X,S)/S}(l))\bigr).
\end{equation}
We show that $\Omega\equiv \tilde{\Omega}$ in ${\cal H}^r(\Omega^{\bullet}_{(X,Z)/S})$ for some $\tilde{\Omega}$ that is killed by multiplication by $\mathop{\rm lcm}\{1,\dots,l\}$.
By (\ref{horrid}), we have that $\Omega=d(\omega)$ for some $\omega\in Q_{D}\otimes \Omega^{r-1}_{(X,Z)/S}$. Write $\omega=\sum_U g_U \tilde{d}x_U$, where $U$ runs over $(r-1)$-element subsets of $\{1,\dots,n\}$. Let $h_U\in \mu^{-1}R_
T$ be the image of $g_U$ under the map $Q_D/Q_{D'}\rightarrow \mu^{-1}R_T$, so that $\Omega=d(\sum_U h_U \tilde{d}x_U)+d(\omega')$, where $\omega'\in Q_{D'}\otimes\Omega^{r-1}_{(X,Z)/S}$. Since $\gcd(j_i:i\in T)$ kills the homology of the complex $(Q_D/Q_{D'})\otimes \Omega^{\bullet}_{(X,Z)/S}$, we have that $d(\sum_U h_U \tilde{d}x_U)$ is killed by $\gcd(j_i:i\in T)$, and so by induction on the cardinality of $D$, 
\[
\Omega=\tilde{\Omega}+d(\tilde{\omega}),
\]
where $\tilde{\Omega}$ is killed by multiplication by $\mathop{\rm lcm}\{1,\dots,l\}$ and $\tilde{\omega}\in \Omega^{r-1}_{(X,Z)/S}$. So in particular, $d(\tilde{\omega})$ vanishes in ${\cal H}^r(\Omega^{\bullet}_{(X,Z)/S})$, and $\Omega\equiv\tilde{\Omega}$, as required.
\end{proof}

It seems hard to find a suitable $k$ of Proposition \ref{vanprop} in complete generality. We can, however, find a suitable $k$ for when ${\cal F}^{\bullet}$ is $\Omega_{(\bar{{\cal X}},{\cal D})}^{\bullet}$ in the case where $\bar{{\cal X}}$ is a smooth hypersurface in ${\bf P}:={\bf P}^{n+1}_{{\cal V}}$ and ${\cal D}$ is a smooth hyperplane section. The following proposition is a generalisation of \cite[Theorem 11.5.2]{katzsarnak}, and is a Bott-Deligne type argument similar to that in \cite[\S 11.6]{katzsarnak}.

\begin{proposition}\label{VanThm1}
Let $\bar{{\cal X}}$ be a smooth hypersurface of degree $d$ in ${\bf P}:={\bf P}^{n+1}$. Then 
\begin{itemize}
\item[(i)] $H^i(\bar{{\cal X}},{\cal O}_{\bar{{\cal X}}}(e))=0$ for $i>0$ and $e>d-(n+2)$, \\
\item[(ii)] $H^i(\bar{{\cal X}},\Omega^j_{\bar{{\cal X}}}(e))=0$ for $i,j>0$ and $e>\max\{jd,(j+1)d-(n+2)\}$.
\end{itemize}
\end{proposition}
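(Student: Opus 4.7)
The plan is to handle (i) first via the closed embedding $\bar{{\cal X}}\hookrightarrow{\bf P}$ together with Bott vanishing on projective space, and then deduce (ii) by induction on $j$, running at each step both the restriction sequence from ${\bf P}$ and the conormal sequence of the embedding.

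For (i), I would tensor the ideal sheaf sequence $0\to{\cal O}_{\bf P}(-d)\to{\cal O}_{\bf P}\to{\cal O}_{\bar{{\cal X}}}\to 0$ by ${\cal O}(e)$ and chase the resulting long exact sequence. Since $\dim{\bf P}=n+1$, the intermediate cohomologies $H^i({\bf P},{\cal O}(e))$ with $0<i<n+1$ vanish for all $e$, so the only real constraint comes from requiring $H^{n+1}({\bf P},{\cal O}(e-d))=0$, which by Serre duality is exactly the condition $e>d-(n+2)$. Vanishing on $\bar{{\cal X}}$ for $i>n$ is automatic by dimension.

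For (ii), I would induct on $j\ge 1$. The conormal sequence
\[
0\to\Omega^{j-1}_{\bar{{\cal X}}}(e-d)\to\Omega^j_{\bf P}|_{\bar{{\cal X}}}(e)\to\Omega^j_{\bar{{\cal X}}}(e)\to 0
\]
splits the target into two pieces: $H^i(\bar{{\cal X}},\Omega^j_{\bf P}|_{\bar{{\cal X}}}(e))$ and $H^{i+1}(\bar{{\cal X}},\Omega^{j-1}_{\bar{{\cal X}}}(e-d))$. The second is dispatched by (i) when $j=1$ and by the inductive hypothesis when $j>1$; the shift $e\mapsto e-d$, $j\mapsto j-1$ carries the bound $e>\max\{jd,(j+1)d-(n+2)\}$ exactly onto the bound required at level $j-1$. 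For the first I would tensor the ideal sheaf sequence by $\Omega^j_{\bf P}(e)$ to get
\[
0\to\Omega^j_{\bf P}(e-d)\to\Omega^j_{\bf P}(e)\to\Omega^j_{\bf P}|_{\bar{{\cal X}}}(e)\to 0,
\]
and invoke Bott's formula on ${\bf P}^{n+1}$, which for $i>0$ forces $H^i({\bf P},\Omega^j(m))=0$ outside the middle-diagonal exception $(i,m)=(j,0)$ and the top-degree exception $i=n+1$ with $m<j-(n+1)$.

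The main obstacle — and the real content of the proposition — is the bookkeeping: checking that the two-part bound $e>\max\{jd,(j+1)d-(n+2)\}$ is exactly strong enough to evade both Bott exceptions simultaneously and to survive the inductive shift. The first bound $e>jd$ is what rules out the middle-diagonal exception by forcing $e-d\ne 0$ whenever $i+1=j$, and the second is calibrated so that $e-d\ge j-(n+1)$, killing the top-degree exception at $i=n$; a short inequality using $d\ge 1$ then confirms that both bounds propagate through the induction on $j$.
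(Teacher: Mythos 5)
Your proof is correct, and it shares the overall architecture of the paper's argument: part (i) via the ideal sheaf sequence, and part (ii) by using the conormal sequence to trade $\Omega^j_{\bar{{\cal X}}}$ for $\Omega^j_{{\bf P}}|_{\bar{{\cal X}}}$ together with a lower-$j$ copy of the problem. Your explicit induction on $j$ and the paper's repeated application of the resulting isomorphism down to $j=0$ are the same reduction in two phrasings, and your check that the bound $e>\max\{jd,(j+1)d-(n+2)\}$ is carried exactly to itself under the shift $(e,j)\mapsto(e-d,j-1)$ is the correct bookkeeping. The genuine divergence is in how the vanishing of $H^i(\bar{{\cal X}},\Omega^j_{{\bf P}}|_{\bar{{\cal X}}}(e))$ for $i>0$ is established. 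The paper does this entirely on $\bar{{\cal X}}$: it takes $j$-th exterior powers of the Euler sequence, restricts to $\bar{{\cal X}}$, and iterates the resulting long exact sequence against part (i) until the cohomological index exceeds $n+1$, obtaining vanishing for all $e>d$ without ever invoking cohomology of $\Omega^j_{{\bf P}}$ on ${\bf P}$ itself. You instead tensor the ideal sheaf sequence with $\Omega^j_{{\bf P}}(e)$ and appeal to Bott's formula on ${\bf P}^{n+1}$ as a black box, then verify that the stated bound dodges both Bott exceptions, the middle-diagonal one via $e>jd\ge d$ so that $e-d\ne 0$ when $i+1=j$, and the top-degree one via $e\ge(j+1)d-(n+1)\ge d+j-(n+1)$ for $d\ge 1$; both checks are sound. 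The trade-off is the familiar one: your route is shorter if Bott vanishing may be cited, while the paper's is self-contained, resting only on the cohomology of line bundles on projective space and avoiding the case analysis over the Bott exceptions.
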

\begin{proof}
We first consider $H^i(\bar{{\cal X}},{\cal O}_{\bar{{\cal X}}}(e))$. There exists an exact sequence of sheaves on ${\bf P}$
\begin{equation}\label{VanProof0}
0\rightarrow {\cal O}_{{\bf P}}(e-d)\rightarrow {\cal O}_{{\bf P}}(e) \rightarrow \pi_{\star}({\cal O}_{\bar{{\cal X}}}(e))\rightarrow 0,
\end{equation}
where the first map is induced by multiplication by the defining equation of $\bar{{\cal X}}$, and $\pi$ is the immersion $\bar{{\cal X}}\hookrightarrow{\bf P}$.
Taking cohomology we obtain a long exact sequence, part of which is
\begin{equation}\label{VanProof-1}
\dots\rightarrow H^i({\bf P},{\cal O}_{{\bf P}}(e))\rightarrow H^i\bigl({\bf P},\pi_{\star}({\cal O}_{\bar{{\cal X}}}(e))\bigr)\rightarrow H^{i+1}({\bf P},{\cal O}_{{\bf P}}(e-d))\rightarrow \dots.
\end{equation}
Since $\pi$ is affine, there are isomorphisms $H^i(\bar{{\cal X}},{\cal O}_{\bar{{\cal X}}}(e))\cong H^i({\bf P},\pi_{\star}({\cal O}_{\bar{{\cal X}}}(e)))$ by \cite[Exercise III.8.2]{harts}. By the cohomology of projective space \cite[Theorem III.5.1]{harts} and (\ref{VanProof-1}) we have $H^i(\bar{{\cal X}},{\cal O}_{\bar{{\cal X}}}(e))=0$ for $i>0$ and $e-d>-(n+2)$. This gives $(i)$.

For $(ii)$, we begin by taking $j$-th exterior powers in the sequence  \cite[Theorem II.8.13]{harts} of sheaves on ${\bf P}$,
\begin{equation}\label{ProjSeq1}
0\rightarrow\Omega^1_{{\bf P}}\rightarrow {\cal O}_{{\bf P}}(-1)^{\oplus n+2}\rightarrow {\cal O}_{{\bf P}}\rightarrow 0,
\end{equation}
to obtain
\begin{equation}\label{ProjProof2}
0\rightarrow \Omega_{{\bf P}}^j\rightarrow {\cal O}_{{\bf P}}(-j)^{\oplus \binom{n+2}{j}}\rightarrow \Omega_{{\bf P}}^{j-1}\rightarrow 0.
\end{equation}
Tensoring this sequence with ${\cal O}_{{\bf P}}(e)$ and restricting to $\bar{{\cal X}}$, we get the short exact sequence
\begin{equation}\label{ProjProof3}
0\rightarrow \Omega_{{\bf P}}^j(e)\vert_{\bar{{\cal X}}}\rightarrow {\cal O}_{\bar{{\cal X}}}(e-j)^{\oplus \binom{n+2}{j}}\rightarrow \Omega_{{\bf P}}^{j-1}(e)\vert_{\bar{{\cal X}}}\rightarrow 0.
\end{equation}
Taking cohomology we obtain the long exact sequence, part of which is
\begin{equation}\label{ProjProof4}
\begin{split}
\dots\rightarrow H^i(\bar{{\cal X}},&{\cal O}_{\bar{{\cal X}}}(e-j))^{\oplus \binom{n+2}{j}} \rightarrow H^i(\bar{{\cal X}},\Omega_{{\bf P}}^{j-1}(e)\vert_{\bar{{\cal X}}}) \\
 & \rightarrow H^{i+1}(\bar{{\cal X}},\Omega_{{\bf P}}^j(e)\vert_{\bar{{\cal X}}})\rightarrow H^{i+1}(\bar{{\cal X}},{\cal O}_{\bar{{\cal X}}}(e-j))^{\oplus \binom{n+2}{j}}\rightarrow\dots.
\end{split}
\end{equation}
By $(i)$, we  get isomorphisms 
\begin{equation}\label{ProfProof4}
H^i(\bar{{\cal X}},\Omega_{{\bf P}}^{j-1}(e)\vert_{\bar{{\cal X}}})\cong H^{i+1}(\bar{{\cal X}},\Omega_{{\bf P}}^j(e)\vert_{\bar{{\cal X}}}),
\end{equation}
when $i>0$ and $e-j>d-(n+2)$. By repeating this argument until we hit cohomology of forms of degree $n+2$, we see that 
\begin{equation}\label{ProjProof5}
H^i(\bar{{\cal X}},\Omega_{{\bf P}}^{j}(e)\vert_{\bar{{\cal X}}})=0\mbox{ when }i>0\mbox{ and }e>d.
\end{equation} 
Note that although it serves our purpose, this is not necessarily the tightest bound on $e$, as if $i>j$ we only need to carry out as many isomorphisms (\ref{ProjProof4}) as take us to cohomology in degree $n+1$.

Now we consider the $j$-th exterior power of the sequence \cite[Theorem II.8.17]{harts} of sheaves on $\bar{{\cal X}}$
\begin{equation}
0\rightarrow {\cal O}_{\bar{{\cal X}}}(-d)\rightarrow \Omega^1_{{\bf P}}\vert_{\bar{{\cal X}}}\rightarrow \Omega^1_{\bar{{\cal X}}}\rightarrow 0.
\end{equation}
Since ${\cal O}_{\bar{{\cal X}}}(-d)$ has rank one, we have
\begin{equation}\label{VanProof1}
0\rightarrow \Omega_{\bar{{\cal X}}}^{j-1}(-d)\rightarrow \Omega^j_{{\bf P}}\vert_{\bar{{\cal X}}}\rightarrow \Omega_{\bar{{\cal X}}}^j\rightarrow 0.
\end{equation}
Twisting by ${\cal O}_{\bar{{\cal X}}}(e)$ and taking cohomology we get a long exact sequence
\begin{equation}\label{VanProof2}
\begin{split}
\dots\rightarrow H^i(\bar{{\cal X}},&\Omega_{{\bf P}}^j(e)\vert_{\bar{{\cal X}}})\rightarrow H^i(\bar{{\cal X}},\Omega_{\bar{{\cal X}}}^j (e))\rightarrow \\
 &H^{i+1}(\bar{{\cal X}},\Omega_{\bar{{\cal X}}}^{j-1}(e-d))\rightarrow H^{i+1}(\bar{{\cal X}},\Omega_{{\bf P}}^j(e)\vert_{\bar{{\cal X}}})\rightarrow\dots.
 \end{split}
\end{equation}
Using equation (\ref{ProjProof5}) we deduce that there are isomorphisms
\[
H^i(\bar{{\cal X}},\Omega_{\bar{{\cal X}}}^j (e))\cong H^{i+1}(\bar{{\cal X}},\Omega_{\bar{{\cal X}}}^{j-1}(e-d)),
\]
whenever $i>0$ and $e>d$. Repeated use of this isomorphism tells us that
\[
H^i(\bar{{\cal X}},\Omega_{\bar{{\cal X}}}^j (e))\cong H^{i+j}(\bar{{\cal X}},{\cal O}_{\bar{{\cal X}}}(e-jd)),
\]
when $i>0$ and $e>jd$, and the latter vanishes when $i+j>0$ and $e-jd>d-(n+2)$ by $(i)$. This gives us $(ii)$. 
\end{proof}

We now apply Proposition \ref{VanThm1} to our case of interest.
\begin{theorem}\label{SmoothVanThm}
Let $\bar{{\cal X}}$ be a smooth hypersurface of degree $d$ in ${\bf P}={\bf P}^{n+1}_{{\cal V}}$, and ${\cal D}$ a smooth hyperplane section. Then
\begin{itemize}
\item[(i)] $H^i(\bar{{\cal X}},{\cal O}_{\bar{{\cal X}}}(e))=0$ for $i>0$ and $e>d-(n+2)$, \\
\item[(ii)] $H^{i}(\bar{{\cal X}},\Omega^j_{(\bar{{\cal X}},{\cal D})}(e))=0$ for $i>0$ and $e>\max\left\lbrace jd,(j+1)d-(n+2)\right\rbrace$.
\end{itemize}
\end{theorem}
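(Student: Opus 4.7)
Part (i) is exactly the conclusion of Proposition \ref{VanThm1}(i), so nothing needs to be done. For part (ii), the case $j=0$ is immediate since $\Omega^{0}_{(\bar{{\cal X}},{\cal D})}={\cal O}_{\bar{{\cal X}}}$, and the bound $e>\max\{0,d-(n+2)\}$ is at least as strong as the bound in (i).

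For $j\geq 1$, my plan is to reduce to Proposition \ref{VanThm1} via the Poincar\'{e} residue short exact sequence
\[
0\to \Omega^{j}_{\bar{{\cal X}}}\to \Omega^{j}_{(\bar{{\cal X}},{\cal D})}\to i_{*}\Omega^{j-1}_{{\cal D}}\to 0,
\]
where $i:{\cal D}\hookrightarrow \bar{{\cal X}}$ is the inclusion. Twisting by ${\cal O}_{\bar{{\cal X}}}(e)$ (and pulling the twist inside $i_{*}$ using the projection formula, since ${\cal O}_{\bar{{\cal X}}}(e)$ is locally free) and taking the long exact sequence of cohomology, I get a segment
\[
H^{i}(\bar{{\cal X}},\Omega^{j}_{\bar{{\cal X}}}(e))\to H^{i}(\bar{{\cal X}},\Omega^{j}_{(\bar{{\cal X}},{\cal D})}(e))\to H^{i}({\cal D},\Omega^{j-1}_{{\cal D}}(e)).
\]
It is then enough to show that both flanking groups vanish under the stated hypotheses.

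The left-hand group vanishes for $i>0$ and $e>\max\{jd,(j+1)d-(n+2)\}$ directly by Proposition \ref{VanThm1}(ii). For the right-hand group, the observation is that ${\cal D}$ is a smooth hypersurface of degree $d$ in the hyperplane $H\cong{\bf P}^{n}_{{\cal V}}$ cutting it out, so Proposition \ref{VanThm1} applies to ${\cal D}$ with $n$ replaced by $n-1$. This yields vanishing of $H^{i}({\cal D},\Omega^{j-1}_{{\cal D}}(e))$ for $i>0$ and $e>\max\{(j-1)d,jd-(n+1)\}$ when $j\geq 2$ (by part (ii) applied to ${\cal D}$), and for $e>d-(n+1)$ when $j=1$ (by part (i) applied to ${\cal D}$). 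A short comparison shows the bound for $\bar{{\cal X}}$ is stronger: $jd\geq (j-1)d$ is trivial, and $(j+1)d-(n+2)\geq jd-(n+1)$ whenever $d\geq 1$, which we may certainly assume.

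The main obstacle, such as it is, is purely bookkeeping: verifying that the single bound $e>\max\{jd,(j+1)d-(n+2)\}$ is strong enough to simultaneously kill both flanking cohomology groups. There is no new geometric input beyond Proposition \ref{VanThm1} and the standard Poincar\'{e} residue sequence for logarithmic differentials along a smooth divisor.
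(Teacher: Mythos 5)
Your proof is correct and follows essentially the same route as the paper's: apply the Poincar\'{e} residue short exact sequence for $\Omega^{j}_{(\bar{{\cal X}},{\cal D})}$, twist by ${\cal O}_{\bar{{\cal X}}}(e)$, take the long exact sequence of cohomology, and invoke Proposition \ref{VanThm1} for both $\bar{{\cal X}}\subset{\bf P}^{n+1}$ and ${\cal D}\subset{\bf P}^{n}$ (with the same degree $d$). The only difference is cosmetic: you address the $j=0$ case and the bookkeeping inequality $(j+1)d-(n+2)\geq jd-(n+1)$ explicitly, while the paper leaves these as straightforward observations.
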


\begin{proof}
$(i)$ is from Theorem \ref{VanThm1}. For $(ii)$ consider the exact sequence \cite[Proposition 2.2.8]{AKR} of sheaves on $\bar{{\cal X}}$
\[
0\rightarrow \Omega^j_{\bar{{\cal X}}}\rightarrow \Omega^j_{(\bar{{\cal X}},{\cal D})}\rightarrow \iota_{\star}\Omega^{j-1}_{{\cal D}}\rightarrow 0,
\]
where $\iota:{\cal D}\hookrightarrow \bar{{\cal X}}$.
Twist by ${\cal O}_{\bar{{\cal X}}}(e)$ and take cohomology to get the long exact sequence
\begin{equation}\label{pairvan1}
\dots\rightarrow H^i(\bar{{\cal X}},\Omega_{\bar{{\cal X}}}^j(e))\rightarrow H^i(\bar{{\cal X}}, \Omega^j_{(\bar{{\cal X}},{\cal D})}(e))\rightarrow H^i({\cal D},\Omega^{j-1}_{{\cal D}}(e))\rightarrow \dots.
\end{equation}
We apply Proposition \ref{VanThm1} to $\bar{{\cal X}}\subset{\bf P}_{{\cal V}}^{n+1}$ and ${\cal D}\subset{\bf P}_{{\cal V}}^{n}$, noting that the degree of ${\cal D}$ is equal to $d$.

Firstly let $i,j>0$. Then 
\[
H^{i}(\bar{{\cal X}}, \Omega^j_{\bar{{\cal X}}}(e))=0
\]
 for $e>\max\left\lbrace jd,(j+1)d-(n+2)\right\rbrace$, and 
\[
H^{i}({\cal D},\Omega^{j-1}_{{\cal D}}(e))=0
\]
 for $e>\max\left\lbrace (j-1)d,jd-(n+1)\right\rbrace.$ This gives $(ii)$. 
\end{proof}

\begin{corollary}\label{CrysBasisCor}
Let $\bar{{\cal X}}$ be a smooth hypersurface of degree $d$ in ${\bf P}={\bf P}^{n+1}_{{\cal V}}$, and ${\cal D}$ a smooth hyperplane section. Let $k$ be an integer such that
\[
k>\max\left\lbrace nd,(n+1)d-(n+2)\right\rbrace .
\]
Then there exists a map
\begin{equation}\label{approxlatticequoTHM}
H^n_{\mathop{\rm cris}}((\bar{X},D))\rightarrow \frac{\Gamma\bigl(\bar{{\cal X}},\Omega^n_{(\bar{{\cal X}},{\cal D})}(k)\bigr)}{d\Bigl(\Gamma\bigl(\bar{{\cal X}},\Omega^{n-1}_{(\bar{{\cal X}},{\cal D})}(k)\bigr)\Bigr)},
\end{equation}
whose kernel and cokernel are killed by multiplication by $p^{n\lfloor \log_p (k+1) \rfloor}$. Moreover, if we let $H(\bar{X},kD)$ denote the image of the right hand side of (\ref{approxlatticequoTHM}) in $H^n_{\mathop{\rm rig}}(U)$, where $U=X\setminus D$, then there is a natural embedding $H(\bar{X},\log D)\rightarrow H(\bar{X},kD)$ with cokernel killed by multiplication by $p^{n\lfloor \log_p (k+1) \rfloor}$.
\end{corollary}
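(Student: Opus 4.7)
The plan is to specialise Proposition~\ref{vanprop} to the choice ${\cal F}^{\bullet}:=\Omega^{\bullet}_{(\bar{{\cal X}},{\cal D})}$. Since ${\cal D}$ is a hyperplane section, ${\cal O}_{\bar{{\cal X}}}(k{\cal D})\cong{\cal O}_{\bar{{\cal X}}}(k)$, so the twist by $k{\cal D}$ appearing in Proposition~\ref{vanprop} coincides with the Serre twist appearing in Theorem~\ref{SmoothVanThm}. Proposition~\ref{kedgoodsheaves} directly supplies the first hypothesis of Proposition~\ref{vanprop}, so the only condition left to verify is the acyclicity of $\Omega^j_{(\bar{{\cal X}},{\cal D})}(k)$ for each $0\le j\le n$.

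To check acyclicity I apply Theorem~\ref{SmoothVanThm} case by case. For $j=0$ the sheaf is ${\cal O}_{\bar{{\cal X}}}(k)$, and part~(i) gives vanishing of higher cohomology once $k>d-(n+2)$. For $1\le j\le n$, part~(ii) gives the required vanishing whenever $k>\max\{jd,(j+1)d-(n+2)\}$. Taking the maximum of these bounds over all relevant $j$ yields exactly $\max\{nd,(n+1)d-(n+2)\}$, which is precisely the hypothesis on $k$. Proposition~\ref{vanprop} then immediately produces the map (\ref{approxlatticequoTHM}) together with the asserted precision bound $p^{n\lfloor\log_p(k+1)\rfloor}$ on its kernel and cokernel, proving the first assertion.

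For the embedding statement, the natural map from the right-hand side of (\ref{approxlatticequoTHM}) to $H^n_{\mathop{\rm rig}}(U)$ is obtained as follows: the map of Proposition~\ref{vanprop} becomes an isomorphism after inverting $p$, and composing its inverse with the Shiho comparison $H^n_{\mathop{\rm cris}}((\bar{X},D))\otimes K\cong H^n_{\mathop{\rm rig}}(U)$ identifies the right-hand side of (\ref{approxlatticequoTHM}) tensored with $K$ with $H^n_{\mathop{\rm rig}}(U)$. By naturality, the composite of the Proposition~\ref{vanprop} map with this identification coincides with the defining map of $H(\bar{X},\log D)$, so $H(\bar{X},\log D)\subset H(\bar{X},kD)$ inside $H^n_{\mathop{\rm rig}}(U)$. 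To bound the cokernel, lift any $x\in H(\bar{X},kD)$ to some $\tilde{x}$ in the right-hand side of (\ref{approxlatticequoTHM}); the cokernel estimate of Proposition~\ref{vanprop} then gives that $p^{n\lfloor\log_p(k+1)\rfloor}\tilde{x}$ lies in the image of $H^n_{\mathop{\rm cris}}((\bar{X},D))$, and pushing forward to $H^n_{\mathop{\rm rig}}(U)$ shows $p^{n\lfloor\log_p(k+1)\rfloor}x\in H(\bar{X},\log D)$, as required.

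The main subtlety is tracking the various comparison isomorphisms carefully so that both $H(\bar{X},\log D)$ and $H(\bar{X},kD)$ are genuinely being compared as submodules of the common $K$-vector space $H^n_{\mathop{\rm rig}}(U)$; once that identification is set up via the Shiho and Proposition~\ref{vanprop} comparisons, everything reduces to routine bookkeeping on top of the cohomological vanishing results already established earlier in this section.
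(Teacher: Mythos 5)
Your proposal is correct and follows essentially the same route as the paper: specialize Proposition~\ref{vanprop} to ${\cal F}^{\bullet}=\Omega^{\bullet}_{(\bar{{\cal X}},{\cal D})}$ via Proposition~\ref{kedgoodsheaves}, invoke Theorem~\ref{SmoothVanThm} for acyclicity of the twisted log-de Rham sheaves, and then track the comparison isomorphisms into $H^n_{\mathop{\rm rig}}(U)$ to get the containment of lattices and the cokernel bound. You spell out the identification of both lattices inside $H^n_{\mathop{\rm rig}}(U)$ slightly more carefully than the paper's terse proof, which instead phrases the injectivity via torsion-freeness of $H(\bar{X},\log D)$, but these are the same argument.
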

\begin{proof}
After applications of Proposition \ref{kedgoodsheaves}, Proposition \ref{vanprop} and Theorem \ref{SmoothVanThm}, we establish the existence of the map (\ref{approxlatticequoTHM}) whose cokernel is killed by multiplication by $p^{n\lfloor \log_p (k+1) \rfloor}$. It remains to show that $H(\bar{X},\log D)\rightarrow H(\bar{X},kD)$ is an embedding with cokernel killed by multiplication by $p^{n\lfloor \log_p (k+1) \rfloor}$. The kernel of (\ref{approxlatticequoTHM}) is torsion, so the kernel of $H(\bar{X},\log D)\rightarrow H(\bar{X},kD)$ must be torsion. However, $H(\bar{X},\log D)$ is torsion free by definition, so the kernel vanishes. The claim on the cokernel follows from the fact that the cokernel of (\ref{approxlatticequoTHM}) is killed by multiplication by $p^{n\lfloor \log_p (k+1) \rfloor}$.
\end{proof}

\section{Quasi-smooth hypersurfaces}\label{nonsmoothcrys}\label{quob}
Suppose there exists a smooth pair $(\tilde{X},\tilde{D})$ with $\tilde{X}$ a hypersurface of degree $\tilde{d}$ in ${\bf P}^{n+1}_{{\bf F}_q}$, and $\tilde{D}$ a smooth hyperplane section. Suppose there also exists a finite group 
\[
G\cong {\bf Z}/(a_0)\times {\bf Z}/(a_1) \times \dots \times {\bf Z}/(a_{n+1}),
\]
acting on $\tilde{X}$ such that $\bar{X}$ and $D$, the $G$-invariant parts in weighted projective space ${\bf P}^{n+1}_{{\bf F}_q}[a_0,\dots,a_{n+1}]$ of $\tilde{X}$ and $\tilde{D}$ respectively, are quasi-smooth. We may without losing generality assume that $\gcd(a_0,\dots,\hat{a_i},\dots,a_{n+1})=1$ for all $i$, where the hat denotes omission. Then we may define the lattices as the $G$-invariant parts of the lattices obtained from the smooth objects, following \cite[\S 4]{laudranks}.

\begin{theorem}\label{GinvLattices} The group $G$ acts on the lattices $H(\tilde{X})$, $H(\tilde{X})_{\mathop{\rm prim}}$ and $H(\tilde{X},\log \tilde{D})$. Define
\begin{itemize} 
\item $H(\bar{X}):=H(\tilde{X})^G$ \\
\item $H(\bar{X})_{\mathop{\rm prim}}:=H(\tilde{X})_{\mathop{\rm prim}}^G$ \\
\item $H(\bar{X},\log D):=H(\tilde{X},\log \tilde{D})^G$.
\end{itemize}
\end{theorem}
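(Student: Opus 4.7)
The plan is to prove this theorem by functoriality, mirroring the approach of \cite[\S 4]{laudranks}; the statement is essentially a formal consequence of the naturality of the cohomology theories that Section \ref{smoothcrys} used to define the three lattices $H(\tilde{X})$, $H(\tilde{X})_{\mathop{\rm prim}}$ and $H(\tilde{X},\log\tilde{D})$.

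First I would observe that because $\bar{X}$ (respectively $D$) is by hypothesis the $G$-invariant part of $\tilde{X}$ (respectively $\tilde{D}$), the group $G$ acts on both $\tilde{X}$ and $\tilde{D}$ and preserves the inclusion $\tilde{D}\hookrightarrow \tilde{X}$; hence $G$ acts on the smooth pair $(\tilde{X},\tilde{D})$ and on the complement $\tilde{U}=\tilde{X}\setminus \tilde{D}$. Functoriality of crystalline, log-crystalline and rigid cohomology with respect to morphisms of (pairs of) varieties then provides ${\cal V}$-linear (respectively $K$-linear) actions of $G$ on each of $H^n_{\mathop{\rm cris}}(\tilde{X})$, $H^n_{\mathop{\rm cris}}((\tilde{X},\tilde{D}))$, $H^n_{\mathop{\rm cris},\mathop{\rm prim}}(\tilde{X})$, $H^n_{\mathop{\rm rig}}(\tilde{X})$ and $H^n_{\mathop{\rm rig}}(\tilde{U})$.

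Next I would verify that every map appearing in the definitions of the three lattices is $G$-equivariant. The comparison isomorphism (\ref{CompTher}), the map $H^n_{\mathop{\rm cris}}((\tilde{X},\tilde{D}))\rightarrow H^n_{\mathop{\rm rig}}(\tilde{U})$ coming from \cite[\S 2.4]{shiho}, the chain of maps defining $H(\tilde{X})_{\mathop{\rm prim}}$ via the excision sequence (\ref{MHStors}), and the map $H^n_{\mathop{\rm cris}}(\tilde{X})\rightarrow H^n_{\mathop{\rm rig}}(\tilde{X})$ are all natural transformations of functors and are therefore $G$-equivariant. Each of the three lattices is then the image of a $G$-equivariant morphism of $G$-modules, hence is itself a $G$-submodule. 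Because $G$ acts by ${\bf F}_q$-morphisms on the underlying varieties, its action commutes with the $q$-power Frobenius $F$, so the $G$-invariant parts of each lattice are $F$-stable. The three objects $H(\bar{X})$, $H(\bar{X})_{\mathop{\rm prim}}$ and $H(\bar{X},\log D)$ are therefore well-defined $F$-stable ${\cal V}$-modules; since the quasi-smoothness hypothesis forces $|G|=\prod a_i$ to be coprime to $p$, taking $G$-invariants is exact and these are full-rank lattices in the corresponding $K$-vector spaces.

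The main technical point requiring care is making rigorous the functoriality of log-crystalline cohomology and of Shiho's comparison with rigid cohomology with respect to the $G$-action on the pair $(\tilde{X},\tilde{D})$, since this involves checking that the PD-structures and the logarithmic structures transport along each $g\in G$. A secondary point is that the excision sequences used to isolate the primitive part involve the ambient projective space, so one needs the $G$-action on $\tilde{X}$ to extend to ${\bf P}^{n+1}_{{\bf F}_q}$; this follows from the assumption that $G$ acts via multiplication of the projective coordinates by roots of unity. Both of these issues are routine to address because everything in sight is natural in the pair $(\tilde{X},\tilde{D})$, so I do not expect either to cause real difficulty.
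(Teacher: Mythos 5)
Your proposal takes essentially the same route as the paper, which simply defers to \cite[\S 4]{laudranks} and observes that Lauder's arguments for $n=2$ carry over unchanged to general $n$. You have, in effect, unpacked what those arguments are: the $G$-action on the pair $(\tilde{X},\tilde{D})$ passes to crystalline, log-crystalline, and rigid cohomology by functoriality; the comparison maps used to define the three lattices are natural transformations and hence $G$-equivariant; the lattices are images of $G$-equivariant maps and therefore $G$-stable; and $G$ commutes with the $\mathbf{F}_q$-rational Frobenius so the invariants are $F$-stable lattices. This is correct and is what the cited reference does.

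One small caution: you assert that quasi-smoothness of the quotient ``forces'' $|G|=\prod a_i$ to be coprime to $p$, and you use this to conclude that taking $G$-invariants is exact and the invariant sublattices have full rank. Quasi-smoothness of $\bar{X}$ is a geometric condition and does not by itself imply $\gcd(|G|,p)=1$; this coprimality is rather a standing hypothesis in the weighted projective setting (implicit here and in \cite[\S 4]{laudranks}) needed precisely so that the invariant functor is exact and the quotient behaves well $p$-adically. You should state it as an assumption rather than derive it. With that correction the argument is complete and matches the intended one.
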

\begin{proof}
See the results in \cite[\S 4]{laudranks}. Although they are proved for $n=2$, the proofs for the case of general $n$ are identical.
\end{proof}
Applying the analysis of \S \ref{smoothcrys} to the pair $(\tilde{X},\tilde{D})$ one obtains the following theorem.
\begin{theorem}\label{QuoCrysBas}
Let $(\tilde{{\cal X}},\tilde{{\cal D}})$ be a smooth pair with $\tilde{{\cal X}}$ a hypersurface of degree $\tilde{d}$ in ${\bf P}^{n+1}_{{\cal V}}$, and $\tilde{{\cal D}}$ a smooth hyperplane section. Let $\tilde{X}$ and $\tilde{D}$ be the special fibres of $\tilde{{\cal X}}$ and $\tilde{{\cal D}}$ respectively. Let $G\cong {\bf Z}/(a_0)\times {\bf Z}/(a_1) \times \dots \times {\bf Z}/(a_{n+1})$ be a finite group acting on $\tilde{X}$, and suppose that $\bar{X}$ and $D$ are the $G$-invariant parts of $\tilde{X}$ and $\tilde{D}$ respectively. Let $H(\bar{X},\log D)$ be the lattice defined in Definition \ref{GinvLattices}. Furthermore, let $k$ be an integer such that
\[
k>\max\bigl\lbrace n\tilde{d},(n+1)\tilde{d}-(n+2)\bigr\rbrace .
\]
Then \begin{itemize}
\item[(i)] $H(\bar{X},\log D)$ is a full rank $F$-invariant ${\cal V}$-lattice in $H^n_{\mathop{\rm rig}}(U)$, \\
\item[(ii)] there exists a map
\begin{equation}\label{GinvapproxlatticequoTHM}
H^n_{\mathop{\rm cris}}((\tilde{X},\tilde{D}))^G\rightarrow \frac{\Gamma\bigl(\tilde{{\cal X}},\Omega^n_{(\tilde{{\cal X}},\tilde{{\cal D}})}(k)\bigr)^G}{d\Bigl(\Gamma\bigl(\tilde{{\cal X}},\Omega^{n-1}_{(\tilde{{\cal X}},\tilde{{\cal D}})}(k)\bigr)\Bigr)^G}=\frac{\Gamma\bigl(\bar{{\cal X}},\Omega^n_{(\bar{{\cal X}},{\cal D})}(k)\bigr)}{d\Bigl(\Gamma\bigl(\bar{{\cal X}},\Omega^{n-1}_{(\bar{{\cal X}},{\cal D})}(k)\bigr)\Bigr)}
\end{equation}
whose kernel and cokernel are killed by multiplication by $p^{n\lfloor \log_p (k+1) \rfloor}$. \\
\item[(iii)] If we let $H(\bar{X},kD)$ denote the image of the right hand side of (\ref{GinvapproxlatticequoTHM}) in $H^n_{\mathop{\rm rig}}(U)$, then there is a natural embedding $H(\bar{X},\log D)\rightarrow H(\bar{X},kD)$ with cokernel killed by multiplication by $p^{n\lfloor \log_p (k+1) \rfloor}$.
\end{itemize}
\end{theorem}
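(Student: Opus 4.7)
The plan is to reduce to the smooth-case Corollary \ref{CrysBasisCor} applied to the auxiliary smooth pair $(\tilde{\cal X},\tilde{\cal D})$, then descend every assertion by taking $G$-invariants. Since the hypothesis on $k$ is precisely the one required for $(\tilde{\cal X},\tilde{\cal D})$, Corollary \ref{CrysBasisCor} directly produces a map
\[
\Phi : H^n_{\mathop{\rm cris}}((\tilde{X},\tilde{D})) \longrightarrow \frac{\Gamma\bigl(\tilde{\cal X},\Omega^n_{(\tilde{\cal X},\tilde{\cal D})}(k)\bigr)}{d\bigl(\Gamma\bigl(\tilde{\cal X},\Omega^{n-1}_{(\tilde{\cal X},\tilde{\cal D})}(k)\bigr)\bigr)}
\]
whose kernel and cokernel are killed by $p^{n\lfloor \log_p(k+1) \rfloor}$, together with an inclusion $H(\tilde{X},\log \tilde{D}) \hookrightarrow H(\tilde{X},k\tilde{D})$ whose cokernel is killed by the same power of $p$. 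The construction of $\Phi$ is functorial in the smooth pair, and $G$ acts by automorphisms of $(\tilde{\cal X},\tilde{\cal D})$, so $\Phi$ is $G$-equivariant.

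For (ii) I would apply the functor $(-)^G$ to $\Phi$ and identify the target with the quotient displayed on the right of (\ref{GinvapproxlatticequoTHM}) using the identifications $\Gamma(\tilde{\cal X},\Omega^j_{(\tilde{\cal X},\tilde{\cal D})}(k))^G \cong \Gamma(\bar{\cal X},\Omega^j_{(\bar{\cal X},{\cal D})}(k))$, which hold because $(\bar{\cal X},{\cal D})$ is the quasi-smooth $G$-quotient of $(\tilde{\cal X},\tilde{\cal D})$ and logarithmic differentials descend through such a quotient, as in \cite[\S 4]{laudranks}. For (iii) the cleanest route is to apply left exactness of $(-)^G$ to the short exact sequence defining the smooth-case cokernel: this yields an injection of $H(\bar{X},kD)/H(\bar{X},\log D)$ into the $G$-invariants of that cokernel, which are killed by $p^{n\lfloor \log_p(k+1) \rfloor}$; torsion-freeness of $H(\bar{X},\log D)$ as a submodule of the torsion-free lattice $H(\tilde{X},\log \tilde{D})$ forces the kernel of the inclusion to vanish. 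For (i) one identifies $H^n_{\mathop{\rm rig}}(U)$ with $H^n_{\mathop{\rm rig}}(\tilde{U})^G$ by standard descent of rigid cohomology through a finite quasi-smooth quotient, after which full rank, $F$-stability, and the lattice property all pass from the smooth case, since $F$ commutes with the $G$-action.

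The main obstacle I foresee is the precision bookkeeping in (ii). The cokernel of $\Phi^G$ naturally fits in an extension whose graded pieces are a submodule of $(\mathop{\rm coker}\Phi)^G$ and a submodule of $H^1(G,\ker\Phi)$; both are killed by $p^{n\lfloor \log_p(k+1) \rfloor}$ (the second because the standard cochain complex for $H^1$ is annihilated by any annihilator of $\ker\Phi$), and one must argue that this extension does not worsen the effective $p$-adic annihilator. The related delicate point is that pushforward along the $G$-quotient must commute with the various constructions (logarithmic differentials, the Serre-vanishing bound, and the crystalline cohomology of the log pair); both issues are addressed in the surface case in \cite[\S 4]{laudranks}, and as the theorem anticipates, the arguments go through verbatim for arbitrary $n$ once Corollary \ref{CrysBasisCor} is in hand.
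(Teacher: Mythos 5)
Your reduction — apply Corollary \ref{CrysBasisCor} to the auxiliary smooth pair $(\tilde{\cal X},\tilde{\cal D})$, then descend by taking $G$-invariants — is essentially the paper's route; the paper's own proof simply says to run the arguments of \S\ref{smoothcrys} with $G$-invariant pieces following \cite[\S 4]{laudranks}, which comes to the same thing. You correctly locate the genuine sticking point in part (ii): the cokernel of $\Phi^G$ sits in an extension with one graded piece inside $(\mathop{\rm coker}\Phi)^G$ and the other inside $H^1(G,\ker\Phi)$, and while each of these is killed by $p^{n\lfloor\log_p(k+1)\rfloor}$, an extension of two $p^N$-torsion modules is in general only $p^{2N}$-torsion. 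As written, your argument therefore only yields the weaker bound $p^{2n\lfloor\log_p(k+1)\rfloor}$ on $\mathop{\rm coker}(\Phi^G)$; you flag the issue but do not close it.

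The missing ingredient is that $p\nmid|G|$ (equivalently $p\nmid a_i$ for each weight $a_i$), which is implicit throughout the weighted-projective setup of \S\ref{quob} and is needed already for $G$-invariants of cohomology to compute cohomology of the quotient. Under this hypothesis the functor $(-)^G$ is exact on ${\cal V}[G]$-modules (apply the averaging idempotent $\frac{1}{|G|}\sum_{g\in G}g$), so $H^1(G,\ker\Phi)=0$ and the extension you worried about degenerates: $\mathop{\rm coker}(\Phi^G)$ is precisely the image of $(\mathop{\rm coker}\Phi)^G$, hence killed by $p^{n\lfloor\log_p(k+1)\rfloor}$ as the theorem asserts. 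With this observation your treatments of (i), (ii) and (iii) all close up; the remaining identifications such as $\Gamma(\tilde{\cal X},\Omega^j_{(\tilde{\cal X},\tilde{\cal D})}(k))^G\cong\Gamma(\bar{\cal X},\Omega^j_{(\bar{\cal X},{\cal D})}(k))$ and the compatibility of the $G$-action with $\Phi$ and with Frobenius are exactly what Lauder verifies in the surface case of \cite[\S 4]{laudranks}, and as you note those verifications are dimension-independent.
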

\begin{proof}
As in \cite[\S 4]{laudranks}, follow the arguments of \S \ref{smoothcrys} with $G$-invariant pieces. 
\end{proof}

\section{Application to point-counting algorithms}

\subsection{Smooth hypersurfaces}\label{smhypapp}
 For a background on how one may use $p$-adic cohomology theories to compute zeta functions of varieties over finite fields see \cite[Chapters 1 and 2]{GMWThesis}. Suppose $\bar{X}$ is a smooth proper hypersurface in ${\bf P}^{n+1}_{{\bf F}_q}$ and $D$ a smooth hyperplane section. We treat the case in which $(\bar{X},D)$ is the quotient of such a pair in weighted projective space in \S \ref{nonsmoothapp}. By (\ref{zetaformula}), in order to compute the zeta function of $\bar{X}$ we seek the polynomial $P_1(T):=\det(1-FT|H(\bar{X})_{\mathop{\rm prim}})$. By (\ref{MHStors}), we have an exact sequence of lattices
\begin{equation}\label{MHS}
0\rightarrow H(\bar{X})_{\mathop{\rm prim}}\rightarrow H(\bar{X}, \log D))\rightarrow H(D)_{\mathop{\rm prim}}(-1)\rightarrow 0
\end{equation}
and so by assuming we know the Frobenius action on $H(D)_{\mathop{\rm prim}}(-1)$ it is sufficient to compute the polynomial $P(T):=\det(1-FT|H(\bar{X}, \log D))$. Suppose we have computed integers $N_{i}$ such that we can recover $P(T)$ exactly given the $i$-th coefficient of $P(T)$ modulo $p^{N_{i}}$. These integers may be calculated using the constraints on the roots of $P(T)$ as dictated by the Weil conjectures in conjunction with the Newton-Girard identities, see \cite[Lemma 1.2.3]{AKR} and \cite[Lemma 6.1.1]{GMWThesis}. 
 Suppose we now have at our disposal a $p$-adic point-counting algorithm that computes a $p$-adic approximation $\tilde{F}$ to a matrix for $F$ acting on the lattice $H(\bar{X}, kD)$, for example, the direct method \cite{AKR}, the deformation method \cite{gerk}, or the fibration method \cite{laudfib}.
  We must now know to what accuracy the matrix $\tilde{F}$ must be computed to ensure that the $i$-th coefficient of $\det(1-\tilde{F}T)$ is equal to the $i$-th coefficient of $P(T)$ modulo $p^{N_{i}}$. We restrict to the case of $q=p$, so ${\cal V}={\bf Z}_p$ and $K={\bf Q}_p$; we remark at the end of the section on the case pf $p\ne q$. We make use of the compatibility of the Hodge structure on crystalline cohomology with the Frobenius map.

\begin{proposition}\label{FrobHodge}
There exist filtrations
\[
0=H(\bar{X})_{n+1}\subseteq H(\bar{X})_{n}\subseteq \cdots \subseteq H(\bar{X})_{0}=H(\bar{X})_{\mathop{\rm prim}}
\]
\[
0=H(D)_{n}\subseteq H(D)_{n-1}\subseteq \cdots \subseteq H(D)_{0}=H(D)_{\mathop{\rm prim}}
\]
such that the $p$-th power Frobenius $F$ acts with $F(H(\bar{X})_{i})\subseteq p^iH(\bar{X})_{\mathop{\rm prim}}$ and $F(H(D)_{i})\subseteq p^iH(D)_{\mathop{\rm prim}}$. We call these the (primitive) Frobenius-Hodge structures on $\bar{X}$ and $D$.

If we write
\[
h_{\bar{X},\mathop{\rm prim}}^{i,n-i}={\mathop{\rm rank}}(H(\bar{X})_{i}/H(\bar{X})_{i+1})
\]
and 
\[
h_{D,\mathop{\rm prim}}^{i,n-i-1}={\mathop{\rm rank}}(H(D)_{i}/H(D)_{i+1}),
\]
then $h_{\bar{X},\mathop{\rm prim}}^{i,j}={\mathop{\rm rank}}(H^j({\bar{\cal X}},\Omega^i_{{\bar{\cal X}}}))$ and $h_{D,\mathop{\rm prim}}^{i,j}={\mathop{\rm rank}}(H^j({\cal D},\Omega^i_{\cal D}))$ for all $i\ne j$, and $h_{\bar{X},\mathop{\rm prim}}^{i,i}={\mathop{\rm rank}}(H^i({\bar{\cal X}},\Omega^i_{{\bar{\cal X}}}))-1$ and $h_{D,\mathop{\rm prim}}^{i,i}={\mathop{\rm rank}}(H^i({\cal D},\Omega^i_{\cal D}))-1$.
\end{proposition}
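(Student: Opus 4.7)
The plan is to import the Hodge filtration from the characteristic-zero de Rham cohomology of the lift $\bar{{\cal X}}$ via the crystalline--de Rham comparison, invoke Mazur--Berthelot--Ogus divisibility to upgrade it to a Frobenius-stable filtration satisfying $F(\mathop{\rm Fil}^i)\subseteq p^i$, and then descend along the quotient (\ref{cristoprim}) that defines the primitive cohomology.

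First I would construct the filtration on $H(\bar{X})$. The stupid truncation $\sigma^{\geq i}\Omega^\bullet_{\bar{{\cal X}}/{\cal V}}$ induces the Hodge filtration $\mathop{\rm Fil}^i H^n_{\mathop{\rm dR}}(\bar{{\cal X}}/{\cal V})$, which transports via (\ref{CompTher}) to $H^n_{\mathop{\rm cris}}(\bar{X})\otimes K$. Intersecting with the lattice $H(\bar{X})$ yields a decreasing filtration by saturated ${\cal V}$-sublattices. Mazur's theorem, in the integral form due to Berthelot--Ogus (\emph{Notes on Crystalline Cohomology}), then supplies the key divisibility: $F(\mathop{\rm Fil}^i H(\bar{X}))\subseteq p^i H(\bar{X})$ for the $p$-th power Frobenius. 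I would push the filtration forward to $H(\bar{X})_{\mathop{\rm prim}}$ along the composition $H^n_{\mathop{\rm cris}}(\bar{X})\to H^n_{\mathop{\rm cris},\mathop{\rm prim}}(\bar{X})\to H(\bar{X})_{\mathop{\rm prim}}\subset H^n_{\mathop{\rm rig}}(U)$. The quotient of (\ref{cristoprim}) is by $\mathop{\rm im}(\Theta)$, and since $\Theta$ is a Gysin map of codimension one it shifts the Hodge filtration by one step and is strict, so the descended filtration retains $F$-divisibility. I set $H(\bar{X})_i$ to be the image of $\mathop{\rm Fil}^i H(\bar{X})$, noting that $H(\bar{X})_{n+1}=0$ because the Hodge filtration vanishes beyond step $n$.

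For the ranks of the graded pieces I would invoke $E_1$-degeneration of the Hodge-to-de Rham spectral sequence for $\bar{{\cal X}}/K$, which holds by classical Hodge theory over the characteristic-zero field $K$, giving $\mathop{\rm gr}^i H^n_{\mathop{\rm dR}}(\bar{{\cal X}}/K)\cong H^{n-i}(\bar{{\cal X}},\Omega^i_{\bar{{\cal X}}})\otimes K$. For $n$ odd this yields the claimed identities directly, since no bidegree on the middle cohomology satisfies $i=j$, and the primitive cohomology equals the full cohomology by Lemma \ref{PrimProp}. For $n$ even, that lemma provides the extra rank-one $F$-crystal $R$ on which $F$ acts by $q^{n/2}$; since $R$ is generated by the restriction of the $(n/2)$-th power of the hyperplane class, it lives in Hodge bidegree $(n/2,n/2)$, which explains the $-1$ correction on the Hodge diagonal. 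The argument for $D$ is identical with $n$ replaced by $n-1$.

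The main obstacle I expect is verifying the integral form of Mazur's divisibility in this precise setup: our lattice $H(\bar{X})_{\mathop{\rm prim}}$ is defined as an image inside $H^n_{\mathop{\rm rig}}(U)$ rather than directly in the de Rham cohomology of the lift, and one must check that this identification, together with strictness of $\Theta$ for the integral Hodge filtration, preserves both the $p^i$-divisibility and the claimed ranks of graded pieces at every step.
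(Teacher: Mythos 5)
Your proposal follows essentially the same route as the paper: the paper's proof is a one-line citation of \cite[\S 5]{mazur} for the full (non-primitive) Frobenius--Hodge filtration, combined with Lemma \ref{PrimProp} to pass to the primitive quotient (and account for the $-1$ correction on the Hodge diagonal when $n$ is even). Your expansion correctly identifies the two ingredients (Mazur's $p^i$-divisibility of the Hodge filtration under Frobenius, and the quotient by the rank-one cycle class $R$ from Lemma \ref{PrimProp}), and your cautious remark about verifying that the identification of the lattice as an image in $H^n_{\mathop{\rm rig}}(U)$ preserves both the divisibility and the graded ranks is exactly the kind of bookkeeping the paper elides with ``easily deduced.''
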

\begin{proof}
See \cite[\S 5]{mazur} for the non-primitive case, from which this proposition is easily deduced using Lemma \ref{PrimProp}. 
\end{proof}

\begin{definition}
Define the integers $h_{(\bar{X},D)}^{i,n-i}$ as 
\[
h_{(\bar{X},D)}^{i,n-i}=\left\lbrace
\begin{array}{ll}
h_{\bar{X},\mathop{\rm prim}}^{0,n} & \mbox{ if } i=0 \\
h_{\bar{X},\mathop{\rm prim}}^{i,n-i}+h_{D,\mathop{\rm prim}}^{i-1,n-i} & \mbox{ if } i\ne 0. \\
\end{array}
\right.
\]
Define $\Gamma_{(\bar{X},D)}$ to be the lower convex hull of the points on the plane with coordinates
\[
\left\lbrace \left(\sum_{i=0}^j h^{i,n-i}_{(\bar{X},D)},\sum_{i=0}^j ih^{i,n-i}_{(\bar{X},D)}\right) \Big\vert 0\le j \le n \right\rbrace.
\]
The polygon $\Gamma_{(\bar{X},D)}$ should be thought of as the Hodge polygon of the pair $(\bar{X},D)$, since the Newton polygon of Frobenius acting on $H(\bar{X}, \log D)$ is bounded below by $\Gamma_{(\bar{X},D)}$ via Proposition \ref{FrobHodge} the exact sequence (\ref{MHS}).
\end{definition}

We now state the main theorem of this chapter which quantifies the loss of precision in computing the characteristic polynomial of Frobenius if we use a basis for $H(\bar{X}, \log D)$.

\begin{theorem}\label{CharPolyLossThm}
Suppose $\bar{X}$ is a smooth proper hypersurface in ${\bf P}^{n+1}_{{\bf F}_p}$ and $D$ a smooth hyperplane section. Suppose further that we have computed a basis ${\cal B}_{\mathop{\rm cris}}$ for the ${\bf Z}_p$-lattice $H(\bar{X},kD)$ using Corollary \ref{CrysBasisCor}, so ${\cal B}_{\mathop{\rm cris}}$ is also a basis for the ${\bf Q}_p$-vector space $H(\bar{X},kD)\otimes_{{\bf Z}_p}{\bf Q}_p$. Let $\tilde{A}$ be an approximation to the matrix for the Frobenius map $F$ acting on $H(\bar{X},kD)\otimes_{{\bf Z}_p}{\bf Q}_p$ with respect to ${\cal B}_{\mathop{\rm cris}}$ that is correct modulo $p^{N}$ where $N\ge n+n\lfloor\log_p(k+1)\rfloor+1$. Write $\det(1-FT|H(\bar{X}, \log D))=\sum_{i=0}^{m}a_i T^i$ and $\det(1-\tilde{A}T)=\sum_{i=0}^{m}\tilde{a}_i T^i$. Then
\[
\mathop{\rm ord}_p(a_i-\tilde{a}_i)\ge N+\Gamma_{(\bar{X},D)}(i)-n\lfloor\log_p(k+1)\rfloor,
\]
where $\Gamma_{(\bar{X},D)}(i)$ means the height of the polygon $\Gamma_{(\bar{X},D)}$ at $i$. 
\end{theorem}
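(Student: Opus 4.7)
The plan is to exploit the Frobenius--Hodge structure of Proposition \ref{FrobHodge} in order to track how the approximation error in $\tilde{A}$ propagates to the characteristic polynomial. Write $e:=n\lfloor\log_p(k+1)\rfloor$. By Corollary \ref{CrysBasisCor} the inclusion $H(\bar{X},\log D)\hookrightarrow H(\bar{X},kD)$ has cokernel killed by $p^e$, so both lattices span a common ${\bf Q}_p$-vector space, and $\det(1-FT|H(\bar{X},\log D))$ is the characteristic polynomial of $F$ on this space and so can be computed in any ${\bf Q}_p$-basis.

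Combining Proposition \ref{FrobHodge} with the exact sequence (\ref{MHS}), I would first produce an $F$-stable filtration $H(\bar{X},\log D)=H_0\supseteq H_1\supseteq\cdots\supseteq H_{n+1}=0$ satisfying $F(H_i)\subseteq p^iH(\bar{X},\log D)$ and whose $i$-th graded piece has rank $h^{i,n-i}_{(\bar{X},D)}$. Choose a basis ${\cal B}_H=\{v_1,\dots,v_m\}$ adapted to this filtration, with $v_j\in H_{\phi(j)}\setminus H_{\phi(j)+1}$ and slopes ordered $\phi(1)\le\phi(2)\le\cdots\le\phi(m)$. In this basis the matrix $B$ of $F$ has $j$-th column of $p$-adic valuation $\ge\phi(j)$; in particular each $i\times i$ principal minor $B_S$ satisfies $\mathop{\rm ord}_p(\det B_S)\ge\sum_{j\in S}\phi(j)$, recovering the Newton-above-Hodge bound $\mathop{\rm ord}_p(a_i)\ge\Gamma_{(\bar{X},D)}(i)$.

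Let $M$ be the change-of-basis matrix from ${\cal B}_{\mathop{\rm cris}}$ to ${\cal B}_H$. Since ${\cal B}_H\subset H(\bar{X},\log D)\subset H(\bar{X},kD)$, the matrix $M$ has entries in ${\bf Z}_p$; and since $p^eH(\bar{X},kD)\subset H(\bar{X},\log D)$, its inverse $M^{-1}$ has entries of valuation $\ge -e$. Writing $A$ for the matrix of $F$ in ${\cal B}_{\mathop{\rm cris}}$ and setting $\tilde{B}:=M^{-1}\tilde{A}M$, we have $\det(1-\tilde{A}T)=\det(1-\tilde{B}T)$, and the identity $\tilde{B}-B=M^{-1}(\tilde{A}-A)M$ forces every entry of $\tilde{B}-B$ to have valuation $\ge N-e$. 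The hypothesis $N\ge n+e+1$ then gives $N-e>n\ge\phi(j)$ for all $j$, so each column of $\tilde{B}$ inherits the Hodge divisibility of the corresponding column of $B$.

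The final step is to write
\[
a_i-\tilde{a}_i=(-1)^i\sum_{|S|=i}\bigl(\det B_S-\det\tilde{B}_S\bigr)
\]
and expand each minor difference by multilinearity of the determinant in columns, with $T\subseteq S$ non-empty indicating which columns of $B_S$ are replaced by the corresponding columns of $\tilde{B}-B$. A retained column contributes its Hodge valuation $\phi(j)$ while a swapped column contributes $N-e$, so each resulting term lies in $p^{|T|(N-e)+\sum_{j\in S\setminus T}\phi(j)}{\bf Z}_p$. Summing over $(S,T)$ and optimizing the combinatorial choice against the convex hull defining $\Gamma_{(\bar{X},D)}$ gives the claimed bound. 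I expect the principal obstacle to be this last combinatorial optimization, where the per-column valuation data must be matched precisely to the polygon height $\Gamma_{(\bar{X},D)}(i)$ and one must verify that single-swap terms control the sum.
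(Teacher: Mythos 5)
The high-level outline — pass to a basis of $H(\bar{X},\log D)$ adapted to Hodge data, expand $\det(1-BT)-\det(1-\tilde{B}T)$ by multilinearity, and account for the change-of-basis penalty $e=n\lfloor\log_p(k+1)\rfloor$ — does reflect the paper's two-step structure (Lemma \ref{CharPolyLossLemma} followed by the change-of-basis lemmas of \cite[Lemmas 5.6 and 5.7]{laudranks}). Your analysis of the passage between ${\cal B}_{\mathop{\rm cris}}$ and a basis of $H(\bar{X},\log D)$, using $p^eH(\bar{X},kD)\subset H(\bar{X},\log D)\subset H(\bar{X},kD)$ to bound $M$ and $M^{-1}$, is correct and corresponds to what the cited lemmas of Lauder do.

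There is, however, a genuine gap at the very first step, where you propose ``an $F$-stable filtration $H(\bar{X},\log D)=H_0\supseteq\cdots\supseteq H_{n+1}=0$ with $F(H_i)\subseteq p^iH(\bar{X},\log D)$ and graded ranks $h^{i,n-i}_{(\bar{X},D)}$.'' No such filtration is available. Proposition \ref{FrobHodge} produces Frobenius--Hodge filtrations on $H(\bar{X})_{\mathop{\rm prim}}$ and $H(D)_{\mathop{\rm prim}}$ \emph{separately}, and the extension (\ref{MHS}) need not split $F$-equivariantly. Concretely, a basis vector lifting a slope-$(i{+}1)$ generator of $H(D)_{\mathop{\rm prim}}(-1)$ has an $F$-image whose $H(D)$-component is divisible by $p^{i+1}$ but whose $H(\bar{X})$-component has, in general, no $p$-divisibility at all — this is exactly what the nonzero blocks $B_{n-1},\dots,B_0$ in (\ref{HodgeBigMat}) record. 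Thus the actual column valuations of $F$ in a Hodge-adapted basis are \emph{not} $n,n-1,\dots,1,0,\,n,\dots,1$; the $D$-columns all have column valuation $0$, and a purely column-valuation argument would put too many $0$-slope columns in and the ``Newton above Hodge'' lower bound $\mathop{\rm ord}_p(a_i)\ge\Gamma_{(\bar{X},D)}(i)$ would fail. The paper's Lemma \ref{CharPolyLossLemma} evades this by working with the full two-dimensional valuation pattern $V$ of (\ref{HodgeBigMatVal}), in which the bottom-left block carries valuation $N$ because those entries of $A$ are genuinely zero, and then invoking \cite[Lemma 5.5]{laudranks}: in any contributing minor, the number of entries drawn from the top-right ($B$) block equals the number drawn from the bottom-left (zero) block. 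With $N\ge 2(n+1)$ this forces the minimizing terms into the block-diagonal part, and only there is the column-valuation picture you want actually valid. There is also a secondary point: even granting a clean column structure, your single-swap terms with $T=\{j\}$ and $j$ the largest-slope column in $S$ only give $(N-e)+\Gamma_{(\bar{X},D)}(i-1)$, which is short of $(N-e)+\Gamma_{(\bar{X},D)}(i)$ by $\phi(i)$; closing this requires the finer bookkeeping (subtracting the \emph{minimum} chosen valuation rather than an arbitrary one, and using that the lowest slope is $0$) that appears in (\ref{MinTrans}). Both of these are exactly the combinatorial points you flag as ``the principal obstacle,'' and they are not optional refinements — without the block structure and Lauder's pairing lemma the bound does not go through.
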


The proof of this theorem mirrors closely that of \cite[Theorem 5.1]{laudranks}. We first prove a lemma for when we have a basis for $H(\bar{X},\log D)$ that is \it adapted to the Hodge structure\normalfont. That is, a basis on which the Frobenius action has a matrix of the form
\begin{equation}\label{HodgeBigMat}
A=\left(
\begin{array}{ccccccccc}
p^nA_n & p^{n-1}A_{n-1} & \dots & pA_1 & A_0 & B_{n-1} & B_{n-2} & \dots & B_0 \\
\bf{[0]} & \bf{[0]} & \dots & \bf{[0]} & \bf{[0]} & p^nC_{n-1} & p^{n-1}C_{n-2} & \dots & pC_0 \\
\end{array}
\right),
\end{equation}
where $A_i,B_i,C_i$ are all matrices over ${\bf Z}_p$ of the following sizes:

\begin{center}
\begin{tabular}{@{}cc@{}} \toprule
 Matrix & Size  \\  \midrule
 $A_i$ &   $h_{\bar{X},\mathop{\rm prim}}^{n-i,i}\times h_{\bar{X},\mathop{\rm prim}}^n$  \\
  $B_i$ &   $h_{D,\mathop{\rm prim}}^{n-i-1,i}\times h_{\bar{X},\mathop{\rm prim}}^n$  \\
  $C_i$ &   $h_{D,\mathop{\rm prim}}^{n-i-1,i}\times h_{D,\mathop{\rm prim}}^n$  \\ \bottomrule
\end{tabular}
\end{center}
and the $\bf{[0]}$ entries in (\ref{HodgeBigMat}) are zero matrices of the appropriate size. Such a basis exists by the Frobenius-Hodge structures of Proposition \ref{FrobHodge} and the exact sequence (\ref{MHS}).

\begin{lemma}\label{CharPolyLossLemma}
Let ${\cal B}_{\mathop{\rm cris}}$ be a basis for the ${\bf Z}_p$-lattice $H(\bar{X},\log D)$ which is adapted to the Hodge structure. Let $A=(a_{i,j})$ be the matrix for the Frobenius map $F$ acting on $H(\bar{X},\log D)$ with respect to ${\cal B}_{\mathop{\rm cris}}$, and $\tilde{A}=(\tilde{a}_{i,j})$ be an approximation to $A$ that is correct modulo $p^{N}$ where $N\ge 2(n+1)$. Write $\det(1-AT)=\sum_{l=0}^{m}a_l T^l$ and $\det(1-\tilde{A}T)=\sum_{l=0}^{m}\tilde{a}_l T^l$. Then
\[
\mathop{\rm ord}_p(a_l-\tilde{a}_l)\ge N+\Gamma_{(\bar{X},D)}(l),
\]
where $\Gamma_{(\bar{X},D)}(l)$ means the height of the polygon $\Gamma_{(\bar{X},D)}$ at $l$. 
\end{lemma}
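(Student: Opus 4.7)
The plan is a perturbation analysis of $a_l = (-1)^l\sum_{|S|=l}\det(A[S,S])$ built on two properties of the adapted basis: the block-upper-triangular shape of $A$ with $[0]$ in the lower-left (forced by the inclusion $H(\bar X)_{\mathop{\rm prim}}\hookrightarrow H(\bar X,\log D)$ in (\ref{MHS})), and the fact that in a Hodge-adapted basis every column indexed by filtration piece $i$ has all entries in $p^i\mathbf{Z}_p$. The first thing I would verify is that these two inputs together force $v(\det(A[S,S]))\ge \sum_{c\in S}i_c\ge \Gamma_{(\bar X,D)}(l)$, via the block factorisation $\det(A[S,S])=\det(P[S_1,S_1])\det(R[S_2,S_2])$ and a column-wise extraction of the $p^{i_c}$ factors in each block; this recovers the Newton–Hodge bound at the level of individual principal minors.

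With $\tilde A=A+p^N E$, I would then expand the perturbed minor by multilinearity of the determinant in columns,
\begin{equation*}
\det(\tilde A[S,S])-\det(A[S,S]) = \sum_{\emptyset\ne T\subseteq S}p^{N|T|}\det(M_T^S),
\end{equation*}
where $M_T^S$ replaces the columns of $A[S,S]$ at indices in $T$ by the corresponding columns of $E[S,S]$. The terms with $|T|\ge 2$ are handled by the crude estimate $v(\det(M_T^S))\ge \Sigma(S)-n|T|$ (each $E$-column lacks the $p^{i_c}$ factor, which costs at most $n$ units); combined with the $p^{N|T|}$ prefactor and the hypothesis $N\ge 2(n+1)$, one gets $|T|(N-n)+\Sigma(S)\ge N+\Sigma(S)\ge N+\Gamma_{(\bar X,D)}(l)$, so these higher-order terms are already safely below the claimed bound.

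The remaining $|T|=1$ contribution, aggregated over $S$, is
$(-1)^l p^N\sum_{|S|=l}\operatorname{tr}(\operatorname{adj}(A[S,S])E[S,S])$,
which by Jacobi's identity equals $(-1)^l p^N\,\partial_t e_l(A+tE)|_{t=0}$. I would rewrite this as the $T^l$-coefficient of
\begin{equation*}
-p^N T\cdot\det(1-AT)\cdot\operatorname{tr}\bigl((1-AT)^{-1}E\bigr) = -p^N\sum_{j+k+1=l}a_j\operatorname{tr}(A^k E)\,T^l+\dotsb,
\end{equation*}
and estimate each product $a_j\operatorname{tr}(A^k E)$ using (i) the factorisation $\det(1-AT)=\det(1-PT)\det(1-RT)$, so that $v(a_j)$ is bounded by a Minkowski convolution of $\Gamma_P$ and $\Gamma_R$, and (ii) the column-wise Hodge bound inherited by every iterate $A^k$. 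The Minkowski-additivity identity $\Gamma_{(\bar X,D)}(l)=\min_{j+k=l}(\Gamma_P(j)+\Gamma_R(k))$, which falls out of the exact sequence (\ref{MHS}) together with the Hodge number identifications in Proposition \ref{FrobHodge}, then collapses the estimate to $v\ge N+\Gamma_{(\bar X,D)}(l)$.

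The main obstacle will be that last step: extracting the Minkowski-additive bound on $a_j\operatorname{tr}(A^k E)$ from the mixed trace, since the raw column bound on $A^k$ alone only gives $v(\operatorname{tr}(A^k E))\ge 0$, and the strengthening has to come from the block decomposition together with the Hodge–Newton inequality applied separately to $P$ and to $R$. This parallels the surface case in \cite[Theorem 5.1]{laudranks}, and I expect the generalisation to arbitrary $n$ to go through with only notational changes, the assumption $N\ge 2(n+1)$ providing exactly the buffer needed to absorb all $O(p^{2N})$ Taylor-remainder terms and to validate the crude per-$S$ estimate used in Step 3.
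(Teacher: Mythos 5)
The proposal takes a genuinely different route from the paper. The paper's proof is almost entirely combinatorial: it sets up a valuation-bound matrix $V$, invokes Lauder's bound \cite[Lemma 5.3]{laudranks}
\[
\mathop{\rm ord}_p(a_l-\tilde a_l)\ge N+\min_{S,\tau}\Bigl\{\sum_{i=1}^l v_{u_i,u_{\tau(i)}}-\min_{1\le i\le l}v_{u_i,u_{\tau(i)}}\Bigr\},
\]
then uses \cite[Lemma 5.5]{laudranks} plus $N\ge 2(n+1)$ to rule out transversal entries hitting the top-right $B$-block or the bottom-left $[0]$-block, and finally identifies the minimum with $\Gamma_{(\bar X,D)}(l)$. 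You instead expand in principal minors, introduce $\tilde A=A+p^NE$, and attack the perturbation term by term via multilinearity, dispatching the $|T|\ge 2$ corrections crudely and reducing the $|T|=1$ piece to a Jacobi-identity / generating-function computation. These are not the same argument.

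However, there is a genuine gap, and it is exactly where you flag the ``main obstacle.'' The $|T|=1$ contribution reduces, in your notation, to $p^N\sum_{j+k+1=l}a_j\operatorname{tr}(A^kE)$, and the $j=l-1$, $k=0$ summand is $p^N a_{l-1}\operatorname{tr}(E)$. Your Hodge bound gives $v(a_{l-1})\ge\Gamma_{(\bar X,D)}(l-1)$, and $\operatorname{tr}(E)$ carries no positive valuation, so the term-by-term estimate only yields $v\ge N+\Gamma_{(\bar X,D)}(l-1)$, which is strictly short of $N+\Gamma_{(\bar X,D)}(l)$ whenever the polygon has positive slope at $l$. The ``Minkowski-additivity'' step you invoke does not supply the missing slope increment: the identity $\Gamma_{(\bar X,D)}(l)=\min_{j+k=l}(\Gamma_P(j)+\Gamma_R(k))$ controls $v(a_j)$ alone, and the iterate bound $v(\operatorname{tr}(A^kE))\ge 0$ is not improvable by block decomposition, because $A^k$ is only block-\emph{upper}-triangular and its off-diagonal block, as well as the $B$-block entries of $R$-columns, carry no valuation. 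In short, you need a mechanism that recovers $\Sigma(S)-\min_{c\in S}i_c$ rather than $\Sigma(S)-\max_{c\in S}i_c$; your per-minor multilinear expansion only produces the latter, and the gap between them is precisely the top slope $\Gamma_{(\bar X,D)}(l)-\Gamma_{(\bar X,D)}(l-1)$.

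A secondary issue, of the same origin, is your step bounding $v(\det(M_T^S))\ge\Sigma(S)-n|T|$ for $|T|\ge 2$. The valuation $\Sigma(S)$ for the \emph{unperturbed} minor is obtained via the block factorisation $\det(A[S,S])=\det(P[S_1,S_1])\det(R[S_2,S_2])$; it is \emph{not} a column-wise estimate, because $R$-columns carry unvalued $B$-block entries in their top part. Once you splice in $E$-columns the zero block in the lower left of $M_T^S$ is destroyed, the factorisation fails, and a naive column-by-column bound no longer returns $\Sigma(S)$ for the un-errored columns. This can presumably be repaired with care (e.g.\ by a Laplace expansion along the lower rows and keeping track of which $E$-columns fall into which block), but as written the estimate is not justified. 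The paper sidesteps both issues at once by handing the whole problem to the two black-box lemmas from \cite{laudranks}; if you want to avoid that, the load-bearing work is to prove the $\sum-\min$ inequality directly for block-triangular matrices with this column-valuation structure, not to tighten the trace estimate.
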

\begin{proof}
The matrix $V=(v_{i,j})$ below is such that $\mathop{\rm ord}_p(a_{i,j}),\mathop{\rm ord}_p(\tilde{a}_{i,j})\ge v_{i,j}$:
\begin{equation}\label{HodgeBigMatVal}
V:=\left(
\begin{array}{ccccccccc}
\bf{[n]} & \bf{[n-1]} & \dots & \bf{[1]} & \bf{[0]} & \bf{[0]} & \bf{[0]} & \dots & \bf{[0]} \\
\bf{[N]} & \bf{[N]} & \dots & \bf{[N]} & \bf{[N]} & \bf{[n]} & \bf{[n-1]} & \dots & \bf{[1]} \\
\end{array}
\right),
\end{equation}
where the blocks are of the same sizes as those in (\ref{HodgeBigMat}).

In the proof of \cite[Lemma 5.3]{laudranks}, Lauder shows that 
\begin{equation}\label{MinTrans}
\mathop{\rm ord}_p(a_l-\tilde{a}_l)\ge N+\min \left\lbrace \sum_{i=1}^l v_{u_i,u_{\tau(i)}} - \min_{1\le i \le l} \{v_{u_i,u_{\tau(i)}}\}\right\rbrace,
\end{equation}
where the first minimum is taken over all choices of indices $1\le u_1 <\dots <u_l\le m$ and permutations $\tau\in S_l$.
By \cite[Lemma 5.5]{laudranks}, if $s$ of the $v_{u_i,u_{\tau(i)}}$ are taken from the top-right $h^{n-1}_{D,\mathop{\rm prim}}\times h^n_{\bar{X},\mathop{\rm prim}}$ submatrix of $V$, then $s$ are also taken from the bottom-left $h^n_{\bar{X},\mathop{\rm prim}}\times h^{n-1}_{D,\mathop{\rm prim}}$ submatrix. 
Since $N\ge2( n+1)$ by assumption, the minimum of $\sum_{i=1}^l v_{u_i,u_{\tau(i)}}$ must be attained when \it none \normalfont of the  $v_{u_i,u_{\tau(i)}}$ are taken from the top-right or the bottom-left. 
They are therefore all taken from distinct rows and columns of the top-left $h^n_{\bar{X},\mathop{\rm prim}}\times h^n_{\bar{X},\mathop{\rm prim}}$ submatrix or the bottom-right $h^{n-1}_{D,\mathop{\rm prim}}\times h^{n-1}_{D,\mathop{\rm prim}}$ submatrix. 
By the definition of $\Gamma_{(\bar{X},D)}$, the minimum the sum $\sum_{i=1}^l v_{u_i,u_{\tau(i)}}$ can (and does) attain is $\Gamma_{(\bar{X},D)}(l)$, with $\min_{1\le i \le l}\{v_{u_i,u_{\tau(i)}}\}=0$. This completes the proof. 
\end{proof}

Theorem \ref{CharPolyLossThm} is deduced from Lemma \ref{CharPolyLossLemma} and the simple change of basis arguments of \cite[Lemmas 5.6 and 5.7]{laudranks}. The following corollary is immediate.

\begin{corollary}
Suppose we have computed integers $N_{i}$ such that we can recover $P(T)=\det(1-FT|H(\bar{X}, \log D))$ exactly given the $i$-th coefficient of $P(T)$ modulo $p^{N_{i}}$. Then it is sufficient to compute the matrix $\tilde{F}$ approximating Frobenius on the lattice $H(\bar{X},kD)$ such that it is correct modulo $p^{N_F}$ where
\[
N_F=\max_i\{N_{i}-\Gamma_{(\bar{X},D)}(i)\}+n\lfloor\log_p(k+1)\rfloor.
\]
\end{corollary}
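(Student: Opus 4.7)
The plan is to invoke Theorem \ref{CharPolyLossThm} directly and unwind the bound to express a sufficient condition on the working precision $N$ in terms of the target precisions $N_i$. Write $N := N_F = \max_i\{N_i - \Gamma_{(\bar{X},D)}(i)\} + n\lfloor\log_p(k+1)\rfloor$ and let $\tilde{F}$ be an approximation to Frobenius on $H(\bar{X},kD)$ correct modulo $p^N$. First I would check that $N$ satisfies the hypothesis $N \ge n + n\lfloor\log_p(k+1)\rfloor + 1$ of Theorem \ref{CharPolyLossThm}; in the relevant range of applications this is a mild condition and may be subsumed into the definition of $N_F$ (e.g.\ by replacing the $N_i$ with $\max\{N_i, n+1+\Gamma_{(\bar{X},D)}(i)\}$ if necessary), so there is no real loss.

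Next, applying Theorem \ref{CharPolyLossThm} yields, for every coefficient index $i$,
\[
\mathop{\rm ord}_p(a_i - \tilde{a}_i) \ge N + \Gamma_{(\bar{X},D)}(i) - n\lfloor\log_p(k+1)\rfloor = \max_j\{N_j - \Gamma_{(\bar{X},D)}(j)\} + \Gamma_{(\bar{X},D)}(i).
\]
By definition of the maximum, the right-hand side is bounded below by $(N_i - \Gamma_{(\bar{X},D)}(i)) + \Gamma_{(\bar{X},D)}(i) = N_i$, so $\tilde{a}_i \equiv a_i \pmod{p^{N_i}}$ for each $i$.

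Finally, by the assumption on the $N_i$, knowing each coefficient of $P(T)$ modulo $p^{N_i}$ determines $P(T)$ exactly, so computing $\det(1-\tilde{F}T)$ and reading off the coefficients modulo the corresponding powers of $p$ recovers $P(T)$. The whole argument is a direct logical rearrangement of the bound in Theorem \ref{CharPolyLossThm}, so there is no substantive obstacle — all the work lies in the preceding lemma and theorem. The only point requiring any care is ensuring the threshold hypothesis $N \ge n + n\lfloor\log_p(k+1)\rfloor + 1$ is met, which is essentially automatic given that the Weil-conjecture-derived bounds on $N_i$ grow linearly in $n$.
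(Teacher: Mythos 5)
Your proof is correct and is the straightforward unwinding of Theorem \ref{CharPolyLossThm}, which is exactly what the paper intends by calling the corollary ``immediate.'' You are also right to flag (and dismiss as harmless) the threshold hypothesis $N \ge n + n\lfloor\log_p(k+1)\rfloor + 1$, a minor point the paper leaves unmentioned.
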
 

 If $p\ne q=p^a$, the author expects that bounds similar to those in Theorem \ref{CharPolyLossThm} should hold for the $p$-power Frobenius map $F_p$, but \it not \normalfont for the $q$-power Frobenius map $F_q$, since Proposition \ref{FrobHodge} is only true for $F_p$. In this situation one can recover the matrix $A^{a,\sigma}$ for $F_q$ by the $\sigma$-linearity of $F_p$,
\[
A^{a,\sigma}=A\cdot A^{\sigma}\cdot A^{\sigma^2}\cdot {\dots}\cdot A^{\sigma^{a-1}}.
\]

\subsection{Quasi-smooth hypersurfaces}\label{nonsmoothapp}

The same results as in \S\ref{smhypapp} hold for quasi-smooth hypersurfaces in weighted projective space with a quasi-smooth hyperplane section. Once we know the existence of Frobenius-Hodge structures, the proofs are identical. The Frobenius-Hodge structures on the quotient of a smooth pair by a finite group $G$ can be obtained by taking $G$-invariant parts of cohomology and reducing to the smooth case. This is done in detail in \cite[\S 4.2]{laudranks} for surfaces, but the general case is the same \it mutatis mutandis\normalfont.

\section{Examples}

In the following examples we compute a basis for the lattice $H(\bar{X},kD)$. In each case the differential forms with logarithmic poles along a hyperplane section coincide exactly with differential forms with simple poles along that section, as illustrated in \cite[Proposition 7.4.9]{GMWThesis}. We compute the quotient in Theorem \ref{CrysBasisCor} by restricting to subrings of ${\cal V}$, for example ${\bf Z}/(p)$ as in \cite[\S 4.3.2]{laudranks} and the second example that follows, or the localisation ${\bf Z}_{(p)}$ as in the first example that follows. 

\begin{example}(A plane curve of degree 7)
Let $C$ be the plane curve defined over ${\bf F}_{5}$ whose affine part is given by the equation
\[
x^7+x^6y+3x^5y^2+x^3y^3+x^2y^5+3x^2y+2xy^6+2x+3y^7+y^5+3y^4+y+3=0.
\]
Then applying Theorem \ref{CrysBasisCor} we may take $k=12$, and by using linear algebra over the localisation ${\bf Z}_{(5)}$ we obtain the basis
\[
\left\lbrace xdy, xydy, xy^2dy,\dots, xy^{10}dy, x^2dy, x^2ydy,\dots, x^2y^9dy, x^3dy,\dots,x^3y^8dy,x^4dy,\dots,x^4y^5dy \right\rbrace
\]
for $H(\bar{C},kD)$. As expected, this basis has size 36, which is twice the genus of $C$ (the primitive $h^1$ of the curve) plus the degree minus one (the primitive $h^0$ of the hyperplane section).
\end{example}

\begin{example}(An elliptic surface)
Let $S$ be the surface defined over ${\bf F}_{11}$ whose affine part is given by the equation
\[
y^2=x^3+(t^{12}+t^9+3t^2+t+1)x+t^{18}+2t^{17}+t^{15}+t^7+t^3+1.
\]
Then applying Theorem \ref{CrysBasisCor} we may take $k=53$, and by using linear algebra over ${\bf Z}/{(11)}$ we obtain the basis
\[
\left\lbrace \frac{dx\wedge dt}{y},\frac{t dx\wedge dt}{y},\frac{t^2 dx\wedge dt}{y}\dots, \frac{t^{22} dx\wedge dt}{y},\frac{x dx\wedge dt}{y},\frac{xt dx\wedge dt}{y},\dots,\frac{xt^{10} dx\wedge dt}{y}\right\rbrace
\]
for $H(\bar{S},kD)$. As expected, this basis has size 34, which is 32 (the primitive $h^2$ of the surface) plus 2 (the primitive $h^1$ of the hyperplane section).
\end{example}

\end{document}